\title[Geodesic Gauss map.]{Minimal Lagrangian submanifolds via the geodesic Gauss map.}
\author{Chris Draper}
\author{Ian McIntosh}
\address{Department of Mathematics\\ University of York\\ York YO10 5DD, UK}
\email{ian.mcintosh@york.ac.uk}
\subjclass{53C42, 53D12}
\date{27 April, 2015}
\newcommand{\R}{\mathbb{R}}
\newcommand{\Gr}{\mathrm{Gr}}
\newcommand{\so}{\mathfrak{so}}
\newcommand{\fg}{\mathfrak{g}}
\newcommand{\fh}{\mathfrak{h}}
\newcommand{\fk}{\mathfrak{k}}
\newcommand{\fl}{\mathfrak{l}}
\newcommand{\fm}{\mathfrak{m}}
\newcommand{\fn}{\mathfrak{n}}
\newcommand{\fp}{\mathfrak{p}}
\newcommand{\fq}{\mathfrak{q}}
\newcommand{\fs}{\mathfrak{s}}
\newcommand{\fv}{\mathfrak{v}}
\newcommand{\fum}{\mathfrak{um}}
\newcommand{\caA}{\mathcal{A}}
\newcommand{\caC}{\mathcal{C}}
\newcommand{\caH}{\mathcal{H}}
\newcommand{\caL}{\mathcal{L}}
\newcommand{\caQ}{\mathcal{Q}}
\newcommand{\caS}{\mathcal{S}}
\newcommand{\caV}{\mathcal{V}}
\newcommand{\caZ}{\mathcal{Z}}
\newcommand{\End}{\operatorname{End}}
\newcommand{\Hom}{\operatorname{Hom}}
\newcommand{\Sp}{\operatorname{Sp}}
\newcommand{\tr}{\operatorname{tr}}
\newcommand{\g}[2]{\langle{#1},{#2}\rangle}
\newcommand{\Kah}{K\"{a}hler}
\newcommand{\Ad}{\operatorname{Ad}}
\newcommand{\ad}{\operatorname{ad}}
\newcommand{\II}{\mathrm{I\! I}}
\renewcommand{\implies}{\Rightarrow}
\newtheorem{thm}{Theorem}[section]
\newtheorem{prop}[thm]{Proposition}
\newtheorem{lem}[thm]{Lemma}
\newtheorem{defn}[thm]{Definition}
\theoremstyle{remark}
\newtheorem{exam}[thm]{Example}
\newtheorem{rem}[thm]{Remark}
\numberwithin{equation}{section}
\begin{document}

\begin{abstract}
For an oriented isometric immersion $f:M\to S^n$ the spherical Gauss map is the Legendrian immersion of its unit
normal bundle $UM^\perp$ into the unit sphere subbundle of $TS^n$, and the geodesic Gauss map $\gamma$ projects this
into the manifold of oriented geodesics in $S^n$ (the Grassmannian of oriented $2$-planes in $\mathbb{R}^{n+1}$), 
giving a Lagrangian immersion of $UM^\perp$ into a K\"{a}hler-Einstein manifold. 
We give expressions for the mean curvature vectors for both the spherical and geodesic Gauss maps in terms of the
second fundamental form of $f$, and show that when $f$ has conformal shape form this depends only on the mean curvature
of $f$. In particular we deduce that the geodesic Gauss map of every minimal surface in $S^n$ is minimal Lagrangian. We
also give simple proofs that: deformations of $f$ always correspond to Hamiltonian deformations of $\gamma$; the mean curvature
vector of $\gamma$ is always a Hamiltonian vector field. This extends work of Palmer on the case when $M$ is a
hypersurface.
\end{abstract}

\maketitle

\section{Introduction.}
A well-known example in symplectic geometry says that, when $f:M\to (N,g)$ is an immersion of a manifold 
into a Riemannian manifold $(N,g)$,
its normal bundle $TM^\perp\to TN$ is a Lagrangian immersion when $TN$ is equipped with its canonical symplectic
structure. Moreover, the unit sphere bundle $UN\subset TN$ has a contact structure and the unit normal bundle of $M$ provides 
a Legendrian immersion
$\mu:UM^\perp\to UN$. This map is sometimes called the \emph{spherical Gauss map}.
The Reeb vector field for this contact structure is the geodesic flow. When this flow is generated by a one-parameter
group action (for example, when every geodesic is closed and has the same length, as is the case with compact rank one
symmetric spaces) the quotient by this action is the manifold of oriented geodesics in $N$, which we will denote by
$\caQ$. For example, when $N=S^n$ this manifold of geodesics is isomorphic to the Grassmanian $\Gr(2,n+1)$ of oriented
$2$-planes in $\R^{n+1}$. Taking this quotient is an example of symplectic reduction of $TN$, and the Legendrian immersion
$\mu$ projects to a Lagrangian immersion $\gamma:UM^\perp\to\caQ$. We will call $\gamma$ the \emph{geodesic Gauss map} for
$f:M\to N$. We will always assume $f(M)$ is orientable, in which case its geodesic Gauss map assigns to each unit normal 
vector the oriented geodesic it generates. When $N=S^n$ and $M$ is a hypersurface, this is just the usual Gauss map 
for a codimension two submanifold of $\R^{n+1}$.

Now it is interesting to ask what the relationship is between the Riemannian properties of 
$f:M\to N$ and $\gamma:UM^\perp\to \caQ$ as isometric immersions. For this one needs a metric on $\caQ$. 
In this article we examine the most natural case, where
$N=S^n$ (with the metric of constant curvature $1$) and $TS^n$ is given the Sasaki metric. In this case geodesic flow 
preserves the metric, which descends to the
standard \Kah-Einstein metric on $\caQ\simeq\Gr(2,n+1)$. These correspondences are most easily understood using
homogeneous geometry, since $S^n$, $US^n$ and $\caQ$ are all reductive homogeneous spaces of $SO(n+1)$ and the metric for
each is just the normal metric determined by a choice of adjoint-invariant bilinear form on $\so(n+1)$. Given this, we
derive expressions for the mean curvature vectors $H_\mu$ and $H_\gamma$ of $\mu$ and $\gamma$
in terms of the second fundamental form $\II_f$ of $f$. 
In fact, we first show that $H_\gamma = (\pi_\caQ)_*H_\mu$ for the projection $\pi_\caQ:US^n\to\caQ$, so
that we need only do calculations for $\mu$. 
When $M$ is a hypersurface the resulting expression for $H_\gamma$ gives another way of looking at
Palmer's formula \cite{Pal97} for $H_\gamma$, which he wrote in terms of the principal curvatures of $f$. 

The relationship between $H_\mu$ and $\II_f$ is mediated by the normal bundle
projection $\pi^\perp:UM^\perp\to M$, and it is only when this is conformal that one obtains
a direct relationship between $H_\mu$ and $H_f = \tr_g\II_f$. In fact $\pi^\perp$ is conformal if and only if $f$ has
\emph{conformal shape form}, by which we mean that its shape operator $A_f:TM^\perp\to\End(TM)$ satisfies $A_f(\xi)^2 =
r(\xi)^2I$ for every $\xi\in UM^\perp$ and some smooth function $r:UM^\perp\to\R$ (cf.\ \cite{GudM}, where they refer to
this property by the name \emph{conformal second fundamental form}). Under this condition we prove (Theorem \ref{thm:main}
below) that:
\begin{enumerate}
\item when $M$ has codimension two or more, $\gamma$ is minimal Lagrangian if and only if $f$ is minimal,
\item when $M$ is a hypersurface, $\gamma$ is minimal Lagrangian if and only if $f$ has constant (possibly zero) mean
curvature.
\end{enumerate}
The condition that $f$ have conformal shape form is somewhat restrictive, but still allows large families of examples. It
is easy to see that every minimal surface has conformal shape form, so these produce extensive families of
minimal Lagrangian submanifolds in $\caQ$ for $n\geq 2$: when $n=2$ this had already been observed by Palmer
\cite{Pal94} (see also \cite{CasU}).
Palmer's more general formula \cite{Pal97} already makes it clear that any isoparametric hypersurface  
of $S^n$ provides a minimal Lagrangian submanifold of $\Gr(2,n+1)$. 
For hypersurfaces of dimension three or more, the combined conditions of conformal shape form and constant
mean curvature force the hypersurface to be isoparametric with at most two
distinct principal curvatures, so our correspondence provides no new examples for hypersurfaces.

In the final section we discuss Lagrangian and Hamiltonian deformations of the geodesic Gauss map. 
We prove two results using
simple circle bundle arguments which exploit the fact that the pullback $\gamma^{-1}US^n$ is a flat circle bundle for
which $\mu$ is a horizontal section.  
Any deformation of $f$ which preserves the diffeomorphism class of the unit normal bundle corresponds
to a (not necessarily unique) deformation of $\gamma$ through Lagrangian immersions,
and we prove that these are always Hamiltonian deformations. For hypersurfaces this was proved in \cite{MaO09}, where the
converse (that short term Hamiltonian deformations arise from deformations of the hypersurface) was also proved. 
This converse cannot be expected to hold for higher codimension immersions $f$ and we explain why. 
We also prove that the mean curvature variation $H_\gamma$ is always a
Hamiltonian variation (for hypersurfaces Palmer \cite{Pal97} deduced this directly from his formula for $H_\gamma$). 
This means it is reasonable
to ask when the Lagrangian mean curvature flow of $\gamma$ will keep it inside the class of geodesic Gauss maps of
immersions of $M$. At present we have no answer to this question.

\medskip\noindent
\textit{Notation.} Throughout this article we will set $G=SO(n+1)$, with Lie algebra $\fg=\so(n+1)$, and equip the latter with the
$\Ad_G$-invariant inner product $\g{\eta}{\zeta} = -\tfrac12 \tr(\eta\zeta)$. For any closed Lie subgroup $K\subset G$,
the quotient $G/K$ is a reductive homogeneous space with reductive decomposition $\fg = \fk+\fm$ (where $\fk$ the
Lie algebra of $K$ and $\fm=\fk^\perp$).  If $\fv\subset\fg$ is a $K$-invariant 
subspace for the adjoint action of $K$, we will denote by $[\fv]_K$ the subbundle
\[
[\fv]_K =\{(gK,\Ad g\cdot\eta):\eta\in\fv\}\subset G/K\times \fg.
\]
In particular, the tangent space $T(G/K)$ is canonically isomorphic to $[\fm]_K$ via 
\[
\beta_K: T(G/K)\to [\fm]_K;\quad \beta_K(X_p) = (p,\xi),\ X_p = \frac{d}{dt}(e^{t\xi}\cdot p)_0.
\]

\section{The geometry of $US^n$.}

By fixing the base point $o=(0,\ldots,0,1)\in S^n\subset\R^{n+1}$ we view $S^n$ as the orbit of $o$ by the standard action
of $G$. This fixes an isomorphism $S^n\simeq G/K$, with isotropy group $K\simeq SO(n)$. 
The inner product induces the round metric $g$ on $S^n$ of sectional curvature $1$.
Therefore $US^n\simeq [\fum]_K$ where $\fum$ is the unit sphere in $\fm$. 

Clearly, $K$ acts transitively on $\fum$ (this is true for any compact rank one symmetric space) and therefore $G$ acts 
transitively on $US^n\simeq[\fum]_K$ by \begin{equation}\label{eq:Gaction}
G\times [\fum]_K\to[\fum]_K;\quad g\cdot(aK,\eta) = (gaK,\Ad g\cdot\eta). 
\end{equation}
We fix a base point $(o,\nu_0)$ for this action by taking
\[
\nu_0 = \begin{pmatrix} 0 & \ldots & 0 & 0\\ \vdots & & & \vdots\\ 0&\ldots &0 & 1\\  0&\ldots & -1 & 0\end{pmatrix}\in\fum.
\]
The isotropy group is therefore
\[
H = \{g\in K:\Ad\cdot\nu_0 = \nu_0\}\simeq SO(n-1).
\]
This fixes an isomorphism $US^n\simeq G/H$. We write the corresponding reductive decomposition as
$\fg = \fh + \fp$ for $\fp=\fh^\perp$, so that $\beta_H:TUS^n\to [\fp]_H$ is the canonical isomorphism. 
We will equip $G/H$ with the $G$-invariant normal metric $h$ induced by the inner product $\g{\ }{\ }$. 
It follows that $\pi:(US^n,h)\to (S^n,g)$ is a $G$-equivariant Riemannian submersion with totally geodesic fibres. 
In fact we will show below that $h$ is the restriction of the Sasaki metric on $TS^n$ to $US^n$. We will denote the horizontal
and vertical decomposition of $TUS^n$ with respect to $\pi:US^n\to S^n$ by $TUS^n=\caH+\caV$. It follows that $\beta_H$
produces isomorphisms $\caH\simeq [\fm]_H$, $\caV\simeq [\fn]_H$, where $\fn = \fm^\perp\cap\fp = \fk\cap\fp$.

The unit sphere bundle $US^n$ is also a contact manifold, and this structure is one of its most important features for
our purposes. Recall that for any Riemannian manifold $(N,g)$ the tangent bundle carries a canonical $1$-form $\theta$,
which can be defined by
\[
\theta(Z) = g(d\pi(Z),\pi_{TN}(Z)),
\]
where $\pi_{TN}:T(TN)\to TN$ is the double tangent bundle's projection.  It is well-known (see, for example \cite[Ex.\
3.44]{McDS}) that when $\theta$ is restricted to $UN$ it becomes a contact form, and the corresponding Reeb vector field
$\caZ$ generates geodesic flow along $UN$. When $N=S^n$ (or indeed any CROSS) geodesic flow provides a circle action,
since every geodesic has the same length. In the homogeneous geometry this circle action is the right action of the
circle subgroup $S = \{\exp(t\nu_0):t\in\R\}$ on $G/H$, which is well-defined since the right actions of $H$ and $S$
commute. Thus $US^n$ has a well-defined quotient by geodesic flow, which
we will denote by $\caQ$, and which is isomorphic to the Grassmanian of oriented $2$-planes in $\R^{n+1}$:
\[
\caQ = US^n/S \simeq G/(H\times S)\simeq SO(n+1)/(SO(n-1)\times SO(2))\simeq \Gr(2,n+1).
\]
We can think of $\caQ$ as the manifold which parameterises oriented geodesics for $S^n$. 
If we set $L=H\times S\subset G$ then the reductive decomposition for $G/L$ is
\[
\fg = \fl + \fq,\quad \fl = \fh+\fs,\quad \fq  =\fl^\perp=\fp\cap\fs^\perp,
\]
where $\fs = \R.\nu_0$. Thus $T\caQ\simeq [\fq]_L$ and when we equip $\caQ$ with the normal metric $h_\caQ$ compatible with
$\g{\ }{\ }$ the quotient map $\pi_\caQ:US^n\to \caQ $ is a Riemannian submersion with
geodesic fibres.  With this metric $\caQ\simeq \Gr(2,n+1)$ is a symmetric space, with
$[\fq,\fq]\subset \fl$, and this implies the following bracket relations which we will make use of later:
\begin{equation}\label{eq:brackets}
[\fn,\fn]\subset\fh,\quad [\fn,\fm_0]\subset \fs,\quad [\fm_0,\fm_0]\subset \fh.
\end{equation} 
Here $\fm_0 = \fm\cap\fq = \fm\cap\fs^\perp$, and we have also used the symmetric space relations for $\fg=\fk+\fm$.
\begin{rem}\label{rem:spheres}
Of course, the first bracket is implicit in the statement that $S^{n-1}\simeq K/H$ is a symmetric space.
The last bracket, together with $[\fh,\fm_0]\subset\fm_0$ from $[\fk,\fm]\subset\fm$, implies that $\bar\fk = \fh+\fm_0$
is also a Lie subalgebra. In fact $\fk\simeq\bar \fk$, so for the Lie subgroup $\bar K$ we also have $S^n\simeq G/\bar K$
and $S^{n-1}\simeq\bar K/H$. This gives us two homogeneous projections
\[
\pi:G/H\to G/K,\quad \bar\pi:G/H\to G/\bar K.
\]
The vertical distribution for $\pi$ is $\caV$, while that for $\bar\pi$ is $\caH_0=[\fm_0]_H=\caH\cap\caC$.
Thus each of these distributions is integrable, and each gives a foliation by totally geodesic $(n-1)$-spheres. 
The fibres of $\bar\pi$ are just the horizontal lifts of totally geodesic $(n-1)$-spheres in $G/K$.
Since each of these foliations is $\pi_\caQ$-horizontal they descend to $\caQ$, where their leaves are totally geodesic
Lagrangian $(n-1)$-spheres.  If we view $US^n$ as $\{(p,v)\in S^n\times S^n:p\cdot v=0\}$ these fibrations are 
\[
\pi,\bar\pi:US^n\to S^n,\quad \pi(p,v)=p,\quad \bar\pi(p,v) = v.
\]
They map into each other under the automorphism $(p,v)\mapsto (v,-p)$, which corresponds to
rotation through $\pi/2$ under the geodesic flow. 
\end{rem}

Whenever it is possible, taking the quotient by geodesic flow is an example of symplectic reduction: 
geodesic flow is the Hamiltonian 
flow corresponding to the squared-length function $\ell:TN\to \R$, $\ell(X) = |X|^2$, and $UN/S = \ell^{-1}(1)/S$
carries the symplectic $2$-form $\lambda_\caQ$ which descends from $\lambda = -d\theta$. For $S^n$ 
we will show below that $\theta = h(\caZ,\cdot)$ and therefore the contact distribution $\caC = \ker\theta$ is also the
horizontal distribution $\caZ^\perp$ for $\pi_\caQ$. Moreover the metric and symplectic structures which descend from $US^n$
are compatible and give the unique (up to scale) $G$-invariant \Kah-Einstein structure on $\Gr(2,n+1)$.

The following lemma summarizes what we need to know about the projection $\pi_\caQ:US^n\to\caQ$ using homogeneous geometry.
\begin{lem}
On $US^n\simeq G/H$, the canonical $1$-form $\theta$ is the $G$-invariant $1$-form corresponding to the
$\Ad_H$-invariant linear form $\theta_0:\fp\to\R$ defined by $\theta_0(\xi) = \g{\nu_0}{\xi}$. Consequently, 
$\caC\simeq [\fq]_H\subset [\fp]_H$. The $Ad_L$-invariant linear map
\[
J_0:\fp\to\fp;\quad \eta\mapsto [\nu_0,\eta],
\]
corresponds to an endomorphism $J \in\End(TUS^n)$ with $J\caZ=0$ and which induces an almost complex structure in
$\caC$. Finally, $\lambda=-d\theta$ is the $G$-invariant $2$-form corresponding to the $\Ad_L$-invariant skew bilinear form
$\lambda_0:\fp\times\fp\to\R$ defined by 
\begin{equation}\label{eq:lambda0}
\lambda_0(\xi,\eta) = \g{\nu_0}{[\xi,\eta]}=\g{J_0\xi}{\eta}. 
\end{equation}
The pair $(\lambda,J)$ therefore both descend to $\caQ\simeq G/L$, where they give the unique $G$-invariant 
KE structure $(\lambda_\caQ,J_\caQ)$ (i.e., the Hermitian symmetric space structure) compatible with
the normal metric $h_\caQ$.
\end{lem}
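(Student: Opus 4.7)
The plan is to verify each assertion at the base point $eH$ and extend by $G$-invariance. All five claims then reduce to computations in the orthogonal splittings $\fg = \fh+\fp$ and $\fp = \fs+\fn+\fm_0$ already in place. For the first, a tangent vector at $(o,\nu_0)$ of the form $\beta_H^{-1}(\xi)$ with $\xi\in\fp$ projects under $d\pi$ to the $\fm$-component of $\xi$, while $\pi_{TN}$ sends it to $\nu_0$, so $\theta_0(\xi) = g(\mathrm{proj}_\fm\xi,\nu_0) = \g{\xi}{\nu_0}$, using that $\nu_0\in\fm$ and $\fm\perp\fk\supset\fh$. This $\theta_0$ is $\Ad_H$-invariant because $H$ fixes $\nu_0$, and $\caC\simeq[\fq]_H$ is immediate from $\caC = \ker\theta$ together with $\fq = \ker\theta_0$.

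For $J_0$ the inclusion $J_0(\fp)\subset\fp$ is a one-line $\ad$-invariance check: for $X\in\fh$, $[\nu_0,X]=0$, hence $\g{[\nu_0,\eta]}{X} = -\g{\eta}{[\nu_0,X]} = 0$. The $\Ad_L$-invariance follows from $L = H\times S$ fixing $\nu_0$. Identifying $\caZ$ at $eH$ with $\nu_0$ (geodesic flow from $(o,\nu_0)$ is generated by $\exp(t\nu_0)$) gives $J\caZ = [\nu_0,\nu_0] = 0$. The substantive step is $J_0^2 = -I$ on $\fq$: for $v\in\fm_0$ the symmetric-space curvature formula for $S^n$ yields
\[
[[v,\nu_0],\nu_0] = \g{v}{\nu_0}\nu_0 - \g{\nu_0}{\nu_0}v = -v,
\]
and anti-symmetry of the bracket rearranges the left side to $[\nu_0,[\nu_0,v]] = J_0^2 v$, giving $J_0^2v = -v$ on $\fm_0$. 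Since $J_0$ sends $\fm_0$ into $\fn = \fp\cap\fk$ (as $[\fm,\fm]\subset\fk$), is injective there by what we just proved, and $\dim\fm_0 = \dim\fn = n-1$, it restricts to an isomorphism $\fm_0\to\fn$. Then for $\xi = J_0 v\in\fn$ we get $J_0^2\xi = J_0(J_0^2 v) = -\xi$, so $J_0^2 = -I$ on all of $\fq$ and $J$ restricts to an almost complex structure on $\caC$.

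For the last claims, the Maurer--Cartan formula together with the vanishing of $\theta_0$ on $\fh$ (since $\nu_0\perp\fh$) yields at $eH$
\[
-d\theta(\xi,\eta) = \theta_0([\xi,\eta]) = \g{\nu_0}{[\xi,\eta]} = \g{J_0\xi}{\eta}
\]
for $\xi,\eta\in\fp$, the last equality being cyclic $\Ad$-invariance, and $\Ad_L$-invariance of $\lambda_0$ is then automatic. To descend $\lambda$ and $J$ to $\caQ$, I note that $\lambda_0(\nu_0,\eta) = \g{[\nu_0,\nu_0]}{\eta} = 0$ and $J_0\nu_0 = 0$, so both are basic for $\pi_\caQ$. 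Hermitian compatibility of $(\lambda_\caQ,J_\caQ,h_\caQ)$ is exactly the formula $\lambda_0(\xi,\eta) = \g{J_0\xi}{\eta}$ restricted to $\fq$; $d\lambda_\caQ = 0$ descends from $\lambda = -d\theta$; integrability of $J_\caQ$ follows from the Hermitian symmetric space criterion $[\fq,\fq]\subset\fl$ together with $\nu_0\in\fz(\fl)$; and $\caQ$ is Einstein (and unique up to scale among $G$-invariant KE structures) because it is an irreducible Riemannian symmetric space. The hard part is the identity $J_0^2 = -I$ on $\fq$, where the specific geometry of $S^n$ enters through the symmetric-space curvature formula; once secured, the remainder is a formal consequence of the reductive homogeneous framework.
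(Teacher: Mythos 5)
Your proof is correct and follows essentially the same route as the paper: the identification of $\theta_0$ via the reductive splitting, the Maurer--Cartan computation of $\lambda_0$, the symmetric-space curvature formula giving $\ad^2\nu_0=-I$ on $\fm_0$ together with the dimension count transferring this to $\fn$, and the appeal to the standard Hermitian symmetric space structure on $G/L$ for the final claims. The only cosmetic difference is that you work at the base point and invoke $G$-invariance, where the paper carries out the same computation in an arbitrary local frame $\Phi$.
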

Our main point here is to make explicit the relationship between the structures $\theta,J,\lambda$ on $US^n$ and
$J_\caQ,\lambda_\caQ$ on $\caQ$.
\begin{proof}
First, $\theta$ is clearly $G$-invariant since the metric $g$ is and the projections $\pi$ and $\pi_{TN}$ are
$G$-equivariant. Now about any $X\in US^n$ we may choose a local frame $\Phi$ of $G\to G/H$, so that
$\beta_K(X) = \Ad\Phi\cdot\nu_0$. For $Y \in T_XUS^n$ we have
\begin{align*}
\theta_X(Y) &= \g{\beta_K(X)}{\beta_K(d\pi(Y))}\\
& = \g{\Ad\Phi\cdot\nu_0}{\Ad \Phi\cdot\phi_\fm(Y)}\\
& = \g{\nu_0}{\phi_\fp}
\end{align*}
for $\phi = \Phi^{-1}d\Phi$, using the fact that for $\xi\in\fm$ and $\eta\in\fp$, $\g{\xi}{\eta_\fm}= \g{\xi}{\eta}$. 
This yields the expression for $\theta_0$ above. To obtain the expression for $\lambda$ we note that
$\theta = \g{\nu_0}{\phi}$, since $[\nu_0,\fh]=0$, and therefore
\[
d\theta = \g{\nu_0}{d\phi} = \g{\nu_0}{-\tfrac12[\phi\wedge\phi]},
\]
by the Maurer-Cartan equations. Thus, writing $\eta=\phi(Y)$ and $\zeta=\phi(Z)$, we have
\begin{align*}
\lambda(Y,Z) &= \g{\nu_0}{[\eta,\zeta]}\\
&=\g{\nu_0}{[\eta_\fh,\zeta_\fh]+[\eta_\fh,\zeta_\fp]+ [\eta_\fp,\zeta_\fh] + [\eta_\fp,\zeta_\fp]}\\
&= \g{\nu_0}{[\eta_\fp,\zeta_\fp]}
\end{align*}
where we have used the adjoint invariance of the inner product to eliminate terms. One can check directly
that $\ad^2\nu_0=-I$ on $\fq$, but the geometric reason is that
$S^n$ has constant curvature $1$, since the curvature on any symmetric
space $G/K$ comes from the $\Ad_K$-invariant tri-linear map
\begin{equation}\label{eq:R}
R_0:\fm\times\fm\times\fm \to\fm,\quad R_0(\xi,\eta)\zeta = -[[\xi,\eta],\zeta].
\end{equation}
In constant curvature $1$, this gives, for $\eta\in\fm_0 = \fm\cap\fs^\perp$,
\[
-\ad^2\nu_0\cdot\eta = R_0(\eta,\nu_0)\nu_0 = \g{\nu_0}{\nu_0}\eta - \g{\eta}{\nu_0}\nu_0 = \eta
\]
Now $\ad\nu_0:\fm_0\to\fn$ is an isomorphism, since it is injective and $\dim\fm_0=n-1=\dim\fn$, so the same
conclusion holds on $\fn$. As a consequence, if we set $\caH_0=\caH\cap\caC$, then
\begin{equation}\label{eq:J}
J:\caV \to \caH_0,\quad J:\caH_0\to\caV,
\end{equation}
are both isomorphisms. The rest of the lemma follows since $(h_\caQ,J_\caQ)$ is plainly the standard Hermitian 
symmetric space structure on 
$G/L\simeq \Gr(2,n+1)$ corresponding to the centre $S$ of its isotropy subgroup $L$. 
\end{proof}
We finish this section with some facts about $US^n$ which we will need in the next section. The first describes 
the geometric meaning of the vertical projection $TUS^n\to\caV$ along $\caH$.
\begin{lem}\label{lem:vertZ}
Let $Z\in T_\xi US^n$ be non-vertical, with vertical component $Z^\caV$. Then 
\begin{equation}\label{eq:vertZ}
d\pi(JZ^\caV) = -\nabla_{\bar Z} Y
\end{equation} 
where $\bar Z = d\pi(Z)$ and $Y(t)$ is any curve in $US^n$ satisfying $Y(0)=\xi$, $\dot{Y}(0) = Z$. 
Hence $Z$ is horizontal whenever $Y(t)$ is parallel along $\pi(Y(t))$.
\end{lem}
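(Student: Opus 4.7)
The plan is to work in the lifted picture, choosing a curve $a : (-\e,\e) \to G$ with $a(0) = g$ and $a(t)H = Y(t)$, where $\xi = gH$. Setting $\phi = a^{-1}\dot a$ and decomposing according to the orthogonal splittings $\fg = \fh + \fp$ and $\fp = \fn + \fm$ as $\phi = \phi_\fh + \phi_\fn + \phi_\fm$, the isomorphism $\beta_H$ identifies $Z = \dot Y(0)$ with $\Ad g \cdot (\phi_\fn(0) + \phi_\fm(0))$, so that $Z^\caV$ corresponds to $\Ad g \cdot \phi_\fn(0)$ and the base velocity $\bar Z = \dot c(0)$, for $c(t) = a(t)K = \pi(Y(t))$, corresponds under $\beta_K$ to $\Ad g \cdot \phi_\fm(0)$.

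With this set-up, the left-hand side of \eqref{eq:vertZ} is immediate: $JZ^\caV$ corresponds under $\beta_H$ to $\Ad g \cdot [\nu_0,\phi_\fn(0)]$, and by the bracket inclusion $[\nu_0,\fn]\subset\fm_0$ already established in the preceding lemma, this element lies in $\fm$. Therefore $d\pi(JZ^\caV) \in T_{c(0)}S^n$ is represented under $\beta_K$ by the same element $\Ad g\cdot [\nu_0,\phi_\fn(0)]$.

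For the right-hand side I would invoke the fact that $S^n = G/K$ is symmetric, so its Levi-Civita connection coincides with the canonical reductive connection. The curve $Y$, viewed as a vector field along $c$, is represented in the trivialisation by $a$ as the constant element $\nu_0 \in \fm$, so the standard formula for the canonical connection collapses to
\[
\nabla_{\bar Z} Y = \Ad g \cdot [\phi_\fk(0),\nu_0].
\]
Writing $\phi_\fk = \phi_\fh + \phi_\fn$ and using the identity $[\fh,\nu_0] = 0$ (definition of $\fh$) leaves only the $\fn$-contribution, so $\nabla_{\bar Z} Y = -\Ad g\cdot [\nu_0,\phi_\fn(0)]$, matching $-d\pi(JZ^\caV)$.

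The final assertion then follows: if $Y$ is parallel along $c$ then $\nabla_{\bar Z} Y = 0$ forces $d\pi(JZ^\caV) = 0$, and since $d\pi|_\caH$ is injective while $J : \caV \to \caH_0$ is an isomorphism by \eqref{eq:J}, this yields $Z^\caV = 0$. The main obstacle is simply bookkeeping across the three nested decompositions $\fg = \fh+\fp$, $\fp = \fn+\fm$, $\fk = \fh+\fn$ together with correctly applying the canonical connection formula; once $[\fh,\nu_0]=0$ is used to kill the $\fh$-piece, both sides reduce to the single expression $\Ad g\cdot [\nu_0,\phi_\fn(0)]$, differing only by a sign.
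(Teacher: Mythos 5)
Your proposal is correct and follows essentially the same route as the paper: frame the curve $Y(t)=a(t)H$, identify $Y$ with the constant element $\nu_0$ along $\pi(Y(t))$, apply the local-frame formula for the Levi-Civita connection of the symmetric space $G/K$, and observe that only the $\fn$-component of $a^{-1}\dot a$ survives, giving $-[\nu_0,\phi_\fn(0)]$. The paper reaches the same reduction by taking the $\fm$-component of $[\eta,\nu_0]$ and citing the bracket relations, whereas you kill the $\fh$- and $\fm$-pieces separately via $[\fh,\nu_0]=0$ and $[\fm,\fm]\subset\fk$; this is only a cosmetic difference.
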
 
\begin{proof}
Let $Y(t)$ be any curve in $G/H$ satisfying $Y(0)=\xi$, $\dot{Y}(0) = Z$ and frame it by a curve $g(t)$ in $G$, i.e, 
$Y(t) = g(t)H$ and therefore $\beta_K(Y)= \Ad g\cdot \nu_0$. Set $\eta = (g^{-1}\dot{g})(0)$, so that
\[
\beta_H(Z)=\beta_H(\dot{Y}(0)) = \Ad g(0)\cdot\eta_\fp,
\]
and therefore $\beta_H(Z^\caV) = \Ad g(0)\cdot\eta_\fn$. On the other hand 
$\bar Z = (d\pi\circ Y/dt)(0)$ and therefore
\begin{align*}
\beta_K(\nabla_{\bar Z} Y)& = \Ad g(0)\cdot\{\frac{d}{dt}\nu_0 + [\eta,\nu_0]\}_\fm\\
&= \Ad g(0)\cdot [\eta_\fn,\nu_0]\\
& = -\beta_K(d\pi(JZ^\caV)),
\end{align*}
since $[\eta,\nu_0]_\fm = [\eta_\fn,\nu_0]$ by \eqref{eq:brackets}.
\end{proof}
It follows from this that the metric $h$ agrees with the Sasaki metric $h_s$, since, in the notation of the previous
lemma, 
\[
h_s(Z,Z) = g(Z^\caH,Z^\caH) + g(\nabla_{\bar Z}Y,\nabla_{\bar Z}Y) = h(Z^\caH,Z^\caH) + h(JZ^\caV,JZ^\caV).
\]
and $J$ is an isometry within $\caC\supset\caV$. 

In the previous proof we used the expression for the Levi-Civita connexion 
$\nabla$ of $(G/K,g)$ in a local frame. Similarly, we note for later use that in a local frame $\Phi:V\to G$ for $G/H$ 
the Levi-Civita connexion $\nabla^h$ for $(US^n,h)$ takes the form
\begin{equation}\label{eq:LC}
\beta_H(\nabla^h_ZW)  = \Ad\Phi\cdot\{Z\phi_\fp(W) + [\phi_\fh(Z),\phi_\fp(W)]+[\phi_\fp(Z),\phi_\fp(W)]_\fp\},
\end{equation}
for $Z,W\in\Gamma(TUS^n)$ and $\phi = \Phi^{-1}d\Phi$.

\section{The spherical and geodesic Gauss maps.}

Let $f:M\to S^n$ be a smooth immersion of an $m$-dimensional oriented manifold $M$, which we equip with the induced metric. 
The normal bundle $TM^\perp$ is the orthogonal complement to $TM\subset f^{-1}TS^n$ and we will write
$\pi^\perp:UM^\perp\to M$ for the unit normal bundle. Then we have an induced immersion $\mu:UM^\perp\to US^n$. Following
\cite{JenR} we will call this the \emph{spherical Gauss map} (not to be confused with Obata's Gauss map \cite{Oba}, which is
sometimes referred to by this name). This is
a Legendrian immersion for the contact structure $\caC$ on $US^n$ \cite[Ex.\ 3.44]{McDS}, and therefore it is horizontal for the
projection $\pi_\caQ:US^n\to\caQ$. The composite $\gamma:UM^\perp\to\caQ$ assigns to each normal vector the oriented geodesic
it generates, and therefore we will call this the \emph{geodesic Gauss map}. This is a Lagrangian immersion, since $\mu$ is
Legendrian. Since $\pi_\caQ$ is a Riemannian submersion, the metric on $UM^\perp$ induced by $\mu$ agrees with that induced by
$\gamma$, and this will be the metric we equip $UM^\perp$ with. The following diagram summarizes the relation between maps
(cf.\ \cite{MaO11}, where $\caQ$ is treated as the complex hyperquadric):
\begin{equation}\label{eq:gammaf}
\begin{array}{ccccc}
UM^\perp & \stackrel{\mu}{\longrightarrow} & US^n & & \\
\pi^\perp\downarrow\quad  & &\quad \downarrow\pi&\stackrel{\pi_\caQ}{\searrow} & \\
M & \stackrel{f}{\longrightarrow} & S^n & &\caQ
\end{array}
\end{equation} 

We will denote the induced connexion, shape operator and second fundamental form of $f$ by $\nabla^f$, $A_f$ and $\II_f$ 
(and similarly for $\mu$ and
$\gamma$), and take its mean curvature to be $H_f=\tr_g\II_f$. Our aim is to describe the relationship between the mean
curvatures $H_f$, $H_\mu$ and $H_\gamma$. For the last two the relationship is straighforward and follows from the next
lemma regarding the behaviour of tension fields.
\begin{lem}\label{lem:tension}
Let $U,P,Q$ be Riemannian manifolds.
Suppose $\varphi:U\to P$ is a smooth immersion and $\psi:P\to Q$ is a Riemannian submersion.
If $\varphi$ is horizontal then its tension field $\tau(\varphi)$ is also horizontal and
$\tau(\psi\circ\varphi)=\psi_*\tau(\varphi)$. Hence $\varphi$ is harmonic if and only if
$\psi\circ\varphi$ is harmonic. Further, if $\varphi$ is isometric then so is $\psi\circ\varphi$, 
therefore $H_{\psi\circ\varphi} = \psi_*H_\varphi$, and $\varphi$ is minimal if and only if $\psi\circ\varphi$ is minimal.
\end{lem}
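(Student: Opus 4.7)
The plan is to base everything on the composition formula for tension fields,
\[
\tau(\psi\circ\varphi) = d\psi(\tau(\varphi)) + \tr_{g_U}(\nabla d\psi)(d\varphi,d\varphi),
\]
where $(\nabla d\psi)$ is the generalised second fundamental form of $\psi$, a symmetric bilinear form on $TP$ with values in $\psi^{-1}TQ$. The first step is to check that $(\nabla d\psi)(X,Y)=0$ whenever $X,Y$ are horizontal. This is a tensorial statement, so it is enough to verify it on basic horizontal vector fields; for such $X,Y$ the identity $d\psi(\nabla^P_X Y) = \nabla^Q_{d\psi X}(d\psi Y)$ is the standard characterisation, via the Koszul formula, of the Levi-Civita connection on the base of a Riemannian submersion, which is precisely the assertion that the generalised second fundamental form vanishes on $\caH\times\caH$. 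Applied to $X=Y=d\varphi(e_i)$ for an orthonormal frame $\{e_i\}$ of $U$, the trace term in the composition formula vanishes because $\varphi$ is horizontal, and one obtains $\tau(\psi\circ\varphi) = d\psi(\tau(\varphi))$.

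The second step is to show that $\tau(\varphi)$ itself is horizontal, which is needed so that harmonicity of $\psi\circ\varphi$ forces harmonicity of $\varphi$ (since $d\psi$ is injective on the horizontal distribution). I would compute the vertical part of $(\nabla d\varphi)(X,Y) = \nabla^\varphi_X d\varphi(Y) - d\varphi(\nabla^U_X Y)$. The second summand is horizontal because $\varphi$ is. For the first, I would choose basic horizontal extensions of $d\varphi(e_i)$ near $\varphi(U)$ in $P$; the O'Neill integrability tensor satisfies $A_X Y = \caV(\nabla^P_X Y)$ on horizontal vectors and is alternating there, so $\caV(\nabla^P_X X)=0$, and symmetrising over the frame shows the vertical part of $\tau(\varphi)$ vanishes. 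Combined with the first step, this yields the equivalence of harmonicity.

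For the isometric assertion: since $\psi$ is a Riemannian submersion, $\psi^*g_Q$ coincides with $g_P$ on horizontal vectors, and $d\varphi(TU)\subset\caH$, so $(\psi\circ\varphi)^*g_Q = \varphi^*g_P = g_U$. With the convention $H=\tr_g\II$ adopted in the paper, the tension field of an isometric immersion is precisely its mean curvature vector, whence $H_{\psi\circ\varphi}=\psi_*H_\varphi$ and the minimality equivalence follows. I expect the main obstacle to be the horizontality of $\tau(\varphi)$, since this does not follow formally from the composition formula but relies on the alternating property of the O'Neill $A$-tensor; everything else is a routine chase through the definitions.
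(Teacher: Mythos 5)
Your argument is correct and in substance identical to the paper's: both proofs reduce to O'Neill's lemmas for Riemannian submersions --- the vanishing of the vertical part of $\nabla^P_{X}X$ for horizontal $X$ gives the horizontality of $\tau(\varphi)$, and the identity $d\psi(\nabla^P_XY)=\nabla^Q_{\psi_*X}\psi_*Y$ on basic horizontal fields gives $\psi_*\tau(\varphi)=\tau(\psi\circ\varphi)$. The only cosmetic difference is that you route the second identity through the general composition formula for tension fields, showing the correction term $\tr_{g_U}(\nabla d\psi)(d\varphi,d\varphi)$ vanishes on horizontal vectors, whereas the paper computes $\psi_*\tau(\varphi)$ term by term; the isometric and minimality assertions are handled the same way in both.
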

\begin{proof}
Set $\gamma = \psi\circ\varphi$, then we can think of $\varphi$ as its horizontal lift. Notice
that $\gamma$ is an immersion since $\varphi$ is a horizontal immersion.
First we show that $\psi_*\tau(\varphi) = \tau(\gamma)$.
Let $E_i$ be a locally orthonormal frame for $U$. Then (using summation convention)
\begin{eqnarray*}
\psi_*\tau(\varphi) & = & \psi_*(\nabla^P_{d\varphi(E_j)}\varphi_*E_j -
d\varphi(\nabla^U_{E_j}E_j))\\
&=& \nabla^Q_{d\gamma(E_j)}\gamma_*E_j - d\gamma(\nabla^U_{E_j}E_j)\\
&=& \tau(\gamma). 
\end{eqnarray*}
Here we used \cite[Lemmas 2,3]{ONe} to deduce that for $X,Y\in\Gamma(TU)$, since $\varphi_*Y$ is
the horizontal lift of $\gamma_*Y$,
\[
\psi_*(\nabla^P_{d\varphi(X)}\varphi_*Y)=\nabla^Q_{d\gamma(X)}\gamma_*Y.
\]
Now the vertical component of $\tau(\varphi)$ is the vertical component of
$\nabla^P_{d\varphi(E_j)}\varphi_*E_j$.
But by \cite[Lemma 3]{ONe} $\nabla^P_XX$ is horizontal whenever $X$ is a horizontal lift.
Hence $\tau(\varphi)$ is horizontal. The final statement is obvious since $\psi$ is a Riemannian
submersion.
\end{proof}
Since $\mu$ and $\gamma$ are isometric immersions we have $H_\mu=\tau(\mu)$ and $H_\gamma=\tau(\gamma)$. The conditions of
the lemma apply to our situation, and therefore $H_\mu$ lies in $\caC$ and $H_\gamma= (\pi_\caQ)_*H_\mu$.

To be able to compare $H_\mu$ with $H_f$ we need to understand how the bundle map $\pi^\perp:UM^\perp\to M$ relates the
induced metrics $\mu^*h$ and $f^*g$. Some of what we want to
know has already been established by Gudmundsson \& Mo \cite{GudM} in their study of unit normal bundles as harmonic
morphisms. First we need a preliminary observation. Let $T(UM^\perp) = \caH_M+\caV_M$ denote the decomposition into
$\pi^\perp$-horizontal and vertical subbundles. Although $\caV_M\subset \mu^{-1}\caV$, it is almost never true that $\caH_M\subset
\mu^{-1}\caH$. In fact this is only true when $f$ is a totally geodesic immersion, for the following reason.
\begin{lem}\label{lem:vertJZ}
Let $Z\in T_\xi(UM^\perp)$ be $\pi^\perp$-horizontal, and write it as $Z = Z^\caH+Z^\caV$ for the splitting
$\mu^{-1}\caH+\mu^{-1}\caV$ and notice that $Z^\caH\in\caH\cap\caC$. Then 
\[
d\pi(JZ^\caV) = A_f(\xi)\bar Z,
\]
where $\bar Z=d\pi(\caZ^\caH)$ we think of $J$ as an almost complex structure in $\caC=[\fq]_H$.
Hence $\pi^\perp$ is a Riemannian submersion if and only if $A_f=0$, i.e., $f$ is
totally geodesic.
\end{lem}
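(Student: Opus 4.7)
The plan is to reduce to Lemma~\ref{lem:vertZ} via the Weingarten equation for $f$. The key observation is that a $\pi^\perp$-horizontal curve in $UM^\perp$ is precisely one whose associated unit normal field along $f$ is $\nabla^\perp$-parallel. So I would pick a $\pi^\perp$-horizontal curve $Y(t)$ in $UM^\perp$ with $Y(0)=\xi$ and $\dot Y(0)=Z$; then $t\mapsto \mu(Y(t))$ is a parallel unit normal field along $c(t)=f(\pi^\perp(Y(t)))$. Note too that $d\mu(Z)\in\caC$ since $\mu$ is Legendrian, which is why its horizontal component $Z^\caH$ lies in $\caH\cap\caC$ as noted in the statement.

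Next I would apply Lemma~\ref{lem:vertZ} to the curve $\mu\circ Y$ in $US^n$. Its tangent at $t=0$ is $d\mu(Z)$, with $d\pi$-projection $\bar Z = d(\pi\circ\mu)(Z) = df(d\pi^\perp(Z))$ and vertical component $Z^\caV$, so that lemma gives
\[
d\pi(JZ^\caV) = -\nabla^{S^n}_{\bar Z}(\mu\circ Y).
\]
Then the Weingarten equation applied to the parallel unit normal field $\mu\circ Y$ along $c$ yields $\nabla^{S^n}_{\bar Z}(\mu\circ Y) = -A_f(\xi)\bar Z$, after the usual identification of $T_pM$ with $df(T_pM)\subset T_{f(p)}S^n$. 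Combining the two displays gives the stated formula $d\pi(JZ^\caV)=A_f(\xi)\bar Z$.

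For the final equivalence, the induced metric on $UM^\perp$ decomposes orthogonally as $|Z|^2=|Z^\caH|^2+|Z^\caV|^2$; since $d\pi$ is an isometry on $\caH$ and $J:\caV\to\caH_0$ is an isometry inside $\caC$, we have $|Z^\caH|=|\bar Z|=|d\pi^\perp(Z)|$ and $|Z^\caV|=|A_f(\xi)\bar Z|$. Hence $\pi^\perp$ restricts to an isometry $\caH_M\to TM$ if and only if $A_f(\xi)\bar Z=0$ for every horizontal $Z$ and every $\xi\in UM^\perp$, i.e., iff $f$ is totally geodesic.

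The only real obstacle is the bookkeeping: checking that the horizontal--vertical splitting of $d\mu(Z)\in T_{\mu(\xi)}US^n$ corresponds coherently with the labels $Z^\caH,Z^\caV$ used in the statement, and that $\pi^\perp$-horizontality of $Y$ really translates into $\nabla^\perp$-parallelism of $\mu\circ Y$. Both are essentially definitional but should be stated carefully since the statement identifies objects on $UM^\perp$ with their images in $US^n$ without comment.
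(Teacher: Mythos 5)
Your proposal is correct and follows essentially the same route as the paper: apply Lemma~\ref{lem:vertZ} to a curve in $UM^\perp$ tangent to $Z$, use $\pi^\perp$-horizontality to kill the normal component of $\nabla_{\bar Z}\xi$, and finish with the Weingarten equation (and your argument for the final equivalence via $|Z|^2=|\bar Z|^2+|A_f(\xi)\bar Z|^2$ matches the paper's formula \eqref{eq:hHM}). The one step you set aside as ``essentially definitional''---that $\pi^\perp$-horizontality of $Z$ amounts to $\nabla^\perp$-parallelism of the unit normal field---is precisely the part the paper proves explicitly, by computing $h(Z,W)=g(\nabla_{\bar Z}\xi,\bar W)$ for $W\in\caV_M$ and using $d\pi(J(\caV_M)_\xi)+\R.\xi=TM^\perp$ together with $g(\nabla_{\bar Z}\xi,\xi)=0$; you should include that short computation rather than treat it as bookkeeping, but it is exactly the check you already flagged.
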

For the proof we need to set up some frame conventions which we will use thoughout the rest of the section. About each
$\xi\in UM^\perp$ we can choose a local frame $\Phi:V\to G$ for $\mu$, and provided we choose $V$ carefully, so that it is
both simply connected and its image $U=\pi(V)\subset M$ is open, there will be a local frame $F:U\to G$ for $f$ related
by $\Phi = (F\circ\pi)\Psi$ for some $\Psi:V\to K$. Set $\phi = \Phi^{-1}d\Phi$, $\alpha = F^{-1}dF$. Then
\[
\phi = \Ad\Psi^{-1}\cdot(\alpha\circ d\pi) + \psi,\quad \psi = \Psi^{-1}d\Psi.
\]
Since $\psi$ takes values in $\fk$, this means
\begin{equation}\label{eq:phim}
\phi_\fk = \Ad\Psi^{-1}\cdot(\alpha\circ d\pi)_\fk + \psi,\quad \phi_\fm = \Ad\Psi^{-1}\cdot(\alpha\circ d\pi)_\fm.
\end{equation}
In particular, $\Ad\Phi\cdot\phi_\fm(Z)= \Ad F\cdot\alpha_\fm(d\pi(Z))$. We also note that whenever $Z\in\caC$
\begin{equation}\label{eq:Jphi}
\phi_\fp(JZ) = J_0\phi_\fp(Z)\implies \phi_\fn(JZ) = J_0\phi_\fm(Z),\ \phi_\fm(JZ) = J_0\phi_\fn(Z).
\end{equation}
\begin{proof}
Using Lemma \ref{lem:vertZ}, we have
\[
\phi_\fm(JZ^\caV) = J_0\phi_\fn(Z^\caV) = [\nu_0,\phi_\fn(Z^\caV)] = -\phi_\fm(\nabla_{\bar Z}\xi), 
\]
where $\xi$ follows any curve tangent to $Z$. Now since $Z\in(\caH_M)_\xi$ we have, for all $W\in(\caV_M)_\xi$,
\begin{align*}
0= h(Z,W) & = h(Z^\caV,W) \\
& = \g{J_0\phi_\fn(Z)}{J_0\phi_\fn(W)}\\
&= \g{\phi_\fm(\nabla_{\bar Z}\xi)}{\phi_\fm(JW)}\\
& = g(\nabla_{\bar Z}\xi,\bar W).
\end{align*}
Now for every $\xi\in UM^\perp$, $TM^\perp = d\pi(J(\caV_M)_\xi)+\R.\xi$, and
$g(\nabla_{\bar Z}\xi,\xi)=0$. Therefore $(\nabla_{\bar Z}\xi)^\perp=0$.  Hence $\nabla_{\bar Z}\xi = -A_f(\xi)\bar Z$. 
\end{proof}
Therefore we can write every $\pi^\perp$-horizontal vector $Z\in T_\xi(UM^\perp)$ as
\[
Z = \bar Z - JA_f(\xi)\bar Z,
\]
if we identify $\bar Z=d\pi(Z)$ with its $\pi$-horizontal lift to $T_\xi US^n$. Consequently in $\caH_M$ the induced metric 
has the form
\begin{equation}\label{eq:hHM}
h(Z,W) = g(\bar Z,\bar W) + g(A_f(\xi)\bar Z,A_f(\xi)\bar W),\quad Z,W\in\caH_M.
\end{equation}
Indeed, this holds for any unit normal bundle equipped with the metric induced by the Sasaki metric \cite{GudM}.
This prompts the following definition.
\begin{defn}
For an isometric immersion $f:M\to N$ into a Riemannian manifold $(N,g)$, we will define its \emph{shape form} 
to be the operator
\[
a_f:TM^\perp\to S^2TM^*,\quad  a_f(\xi)(X,Y) = g(A_f(\xi)X,A_f(\xi)Y),
\] 
for $\xi\in T_pM^\perp$, $X,Y\in T_pM$. 
We will say $f$ has \emph{(weakly) conformal shape form} when there exists a smooth function $r:UM^\perp\to\R$ for which
$a_f(\xi) = r(\xi)^2 g$ for all $\xi\in UM^\perp$. This is equivalent to the condition that 
$A_f(\xi)^2 = r(\xi)^2I$ for all $\xi\in UM^\perp$, and therefore $r(\xi)^2 = \tfrac{1}{m}\|A_f(\xi)\|^2$.
\end{defn}
Notice that $a_f(\xi)$ is positive semi-definite for all $\xi\neq 0$, since $A_f$ is symmetric.

Gudmundsson \& Mo \cite{GudM,Mo} use the terminology \emph{conformal second fundamental form} to refer to the condition
defined above. But that phrase was used much earlier by Jensen \& Rigoli \cite{JenR} to refer to a very different condition, 
and so we have chosen to avoid confusion by providing a different name.

Given the expression \eqref{eq:hHM} we deduce immediately:
\begin{lem}[\cite{GudM} Prop.\ 4.2]\label{lem:horizconf}
The submersion $\pi^\perp:UM^\perp\to M$ is horizontally conformal if and only if 
$f$ has conformal shape form. The conformal factor is $1/(1+r(\xi)^2)$. In particular, every minimal surface has conformal
shape form.
\end{lem}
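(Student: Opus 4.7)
The plan is to read the lemma off the formula \eqref{eq:hHM} for the induced metric on $\caH_M$ almost directly. First note that $d\pi^\perp|_{\caH_M}:\caH_M\to TM$ is a linear isomorphism, and via the isometry $df$ we identify $d\pi^\perp(Z)$ with the $\pi$-horizontal lift $\bar Z$ appearing in \eqref{eq:hHM}. Then $\pi^\perp$ is horizontally conformal at $\xi$ precisely when there is a positive scalar $\lambda(\xi)$ with
\[
\lambda(\xi)\,h(Z,W) = g(\bar Z,\bar W) \quad\text{for all } Z,W\in(\caH_M)_\xi.
\]

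Substituting \eqref{eq:hHM} and using the symmetry of $A_f(\xi)$, this becomes
\[
g\bigl(A_f(\xi)^2 X,Y\bigr) = \frac{1-\lambda(\xi)}{\lambda(\xi)}\,g(X,Y)\quad\text{for all } X,Y\in T_{\pi^\perp(\xi)}M,
\]
since $\bar Z$ ranges over all of $T_pM$ as $Z$ ranges over $(\caH_M)_\xi$. This identity holds for all $X,Y$ if and only if $A_f(\xi)^2 = r(\xi)^2 I$ for some function $r$, and then $\lambda(\xi)=1/(1+r(\xi)^2)$. This is the conformal shape form condition together with the stated conformal factor, giving both directions of the equivalence.

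For the final claim, suppose $m=2$ and $H_f=0$. Since $\tr A_f(\xi)=g(H_f,\xi)$, the operator $A_f(\xi)$ is a traceless symmetric endomorphism of a $2$-dimensional Euclidean space, so its eigenvalues are $\pm\kappa(\xi)$ and $A_f(\xi)^2 = \kappa(\xi)^2 I$; thus conformal shape form holds with $r(\xi)=|\kappa(\xi)|$.

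There is essentially no obstacle: once \eqref{eq:hHM} is in hand the proof is a short pointwise linear-algebra calculation. The only thing to watch is keeping the two horizontal distributions distinct ($\caH_M\subset T(UM^\perp)$ for $\pi^\perp$ versus $\caH\subset TUS^n$ for $\pi$) and using that $d\pi^\perp$ maps $\caH_M$ isomorphically onto $TM$, so that the middle display may be tested on arbitrary $X,Y\in T_pM$.
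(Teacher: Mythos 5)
Your argument is correct and is essentially the paper's: the authors likewise deduce the equivalence ``immediately'' from the induced-metric formula \eqref{eq:hHM}, and establish the final claim by noting that a traceless symmetric operator on a $2$-dimensional space satisfies $A_f(\xi)^2=-\det(A_f(\xi))I$ (your eigenvalue phrasing is the same observation). You have merely written out the pointwise linear algebra that the paper leaves implicit, including the correct identification of the conformal factor $1/(1+r(\xi)^2)$.
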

The last statement is easy to see: $f$ is minimal if and only if  $\tr A_f(\xi)=0$ for every unit normal vector $\xi$. 
In dimension two this means $A_f(\xi)^2 = -\det(A_f(\xi)) I$ and $\det(A_f(\xi))<0$. 
We can now state our main result.
\begin{thm}\label{thm:main}
Suppose $f:M\to S^n$ has conformal shape form. Then at $\xi\in UM^\perp$, for $Z,W\in T_\xi(UM^\perp)$ with $Z$
$\pi^\perp$-horizontal and $W$ vertical, 
\begin{align}
\lambda(H_\mu,Z) & = \tfrac{1}{1+r(\xi)^2} g(\nabla^\perp_{\bar Z}H_f,\xi),\label{eq:HZ}\\
\lambda(H_\mu,W) & = - \tfrac{1}{1+r(\xi)^2} g(H_f,\bar{W}),\label{eq:HW}
\end{align}
where $r(\xi)^2 = \tfrac{1}{m}\tr_gA_f(\xi)^2$, $\bar{Z}=d\pi(Z)$ and $\bar{W}=d\pi(JW)$. 
For hypersurfaces the second equation is vacuous. 
Thus for $m<n-1$, $\mu$ is minimal (equally, $\gamma$ is minimal
Lagrangian) if and only if $f$ is minimal. For hypersurfaces $\mu$ is minimal if and only if $f$ has constant mean
curvature.
\end{thm}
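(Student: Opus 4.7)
The plan is to compute $H_\mu$ directly as the tension field $\tau(\mu)$ using the Levi-Civita formula \eqref{eq:LC} in a local frame adapted to $\mu$, and then pair the result with $Z$ or $W$ via $\lambda_0(\xi,\eta) = \g{\nu_0}{[\xi,\eta]}$ from \eqref{eq:lambda0}. The conformal shape form hypothesis is exactly what collapses the resulting expression for $\phi_\fp(H_\mu)$ down to the combinations $H_f$ and $\nabla^\perp H_f$; once formulae \eqref{eq:HZ}-\eqref{eq:HW} are established, the minimality consequences fall out immediately.

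For the explicit computation, first build an adapted orthonormal frame at $\xi \in UM^\perp$ over $p = \pi^\perp(\xi)$. Starting from an orthonormal basis $\{e_i\}$ of $T_p M$, form the $\pi^\perp$-horizontal lifts $\tilde e_i = \bar e_i - J A_f(\xi)\bar e_i \in \caH_M$ provided by Lemma \ref{lem:vertJZ}; by \eqref{eq:hHM} combined with $A_f(\xi)^2 = r(\xi)^2 I$, they satisfy $h(\tilde e_i, \tilde e_j) = (1+r^2)\delta_{ij}$, so $E_i = \tilde e_i/\sqrt{1+r^2}$ is orthonormal in $\caH_M$. Complete with an orthonormal basis $\{V_\alpha\}$ of $\caV_M$. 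Next use local frames $\Phi = (F\circ\pi^\perp)\Psi$ with $\Psi$ valued in $K$, as in \eqref{eq:phim}-\eqref{eq:Jphi}, and apply \eqref{eq:LC} to each term in $\tau(\mu) = \sum_i \II_\mu(E_i,E_i) + \sum_\alpha \II_\mu(V_\alpha,V_\alpha)$. The derivative pieces $E_i\phi_\fp(E_i)$ encode $\II_f(e_i,e_i)$ twisted through $\Psi$, while the bracket pieces combine with the vertical-vertical trace via the symmetric-space relations \eqref{eq:brackets}; under conformal shape form the $A_f$-quadratic cross-terms cancel and only $H_f$ survives, weighted by $1/(1+r^2)$. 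Finally, pair with $Z$ or $W$: for vertical $W$, $\phi_\fp(W)\in\fn$ and the only piece of $[\phi_\fp(H_\mu),\phi_\fp(W)]$ landing in $\fs$ is $[\phi_{\fm_0}(H_\mu),\phi_\fn(W)]\subset\fs$ by \eqref{eq:brackets}, yielding \eqref{eq:HW}; for horizontal $Z$, the surviving $\bar Z$-derivative in $\phi_{\fm_0}(H_\mu)$ contributes the normal-connection term $\g{\nabla^\perp_{\bar Z}H_f}{\xi}$ to give \eqref{eq:HZ}.

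The minimality consequences are immediate. Since $H_\mu\in\caC$ with $\mu$ Legendrian, one has $H_\mu = J d\mu X_\mu$ for some $X_\mu\in T(UM^\perp)$, and the Kähler identity $\lambda(H_\mu,d\mu Y) = -h(d\mu X_\mu, d\mu Y)$ shows that $H_\mu=0$ iff both \eqref{eq:HZ} and \eqref{eq:HW} vanish identically. In codimension $\geq 2$, varying $W$ over $\caV_M$ and $\xi$ over $UM^\perp$, the vectors $\bar W = d\pi(JW)$ span $TM^\perp$ via the isomorphism $J:\caV\to\caH_0$, so \eqref{eq:HW} vanishes identically iff $H_f=0$; the converse vanishing of \eqref{eq:HZ} is then trivial. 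For hypersurfaces $\caV_M$ is trivial and \eqref{eq:HW} is vacuous; writing $H_f = h_f\xi_0$ for a unit normal $\xi_0$, the rank-$1$ normal bundle gives $\nabla^\perp_{\bar Z} H_f = (\bar Z h_f)\xi_0$, so \eqref{eq:HZ} vanishes identically iff $h_f$ is constant.

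The main obstacle will be the middle step: organising the bracket and derivative terms produced by \eqref{eq:LC} so that the conformal shape form cancellation becomes manifest, and showing in particular that $\sum_\alpha \II_\mu(V_\alpha,V_\alpha)$ combines cleanly with the horizontal trace. The fibers of $\pi^\perp$ embed as totally geodesic subspheres of the fibers of $\pi$, but their bending across $M$ produces a nonzero horizontal contribution to $\sum_\alpha \II_\mu(V_\alpha,V_\alpha)$ that must collapse, precisely under conformal shape form, into a scalar multiple of $H_f$ compatible with the horizontal trace. Tracking this through the $\Psi$-twist while keeping the bracket relations \eqref{eq:brackets} and the structural identities \eqref{eq:Jphi} in play is where the real bookkeeping lives.
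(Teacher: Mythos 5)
Your route is the paper's route: the adapted frame $E_i=\tilde e_i/\sqrt{1+r^2}$ on $\caH_M$ together with a vertical frame, the homogeneous formula \eqref{eq:LC} for $\nabla^h$, the frame relation $\Phi=(F\circ\pi^\perp)\Psi$, the pairing through $\lambda_0$, and the same endgame for the minimality statements (your span argument for $\bar W$ as $\xi$ varies in codimension $\geq 2$, and the rank-one normal bundle argument for hypersurfaces, are both correct). The problem is the middle step, which you rightly flag as the crux but whose mechanics you have mis-anticipated in two ways. First, the vertical trace $\sum_\alpha\II_\mu(V_\alpha,V_\alpha)$ does not produce ``a nonzero horizontal contribution that must collapse under conformal shape form'': it vanishes identically, with no hypothesis on $f$. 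The fibre $U_pM^\perp$ is a great, hence totally geodesic, subsphere of the fibre $U_pS^n$, which is itself a totally geodesic fibre of the submersion $\pi$; so $\nabla^h_{V_\alpha}V_\alpha$ is tangent to $U_pM^\perp\subset\mu(UM^\perp)$ and each $\II_\mu(V_\alpha,V_\alpha)=0$. Hunting for a cancellation against the horizontal trace here is chasing a term that is not there.

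Second, and more seriously, the horizontal trace does not reduce to $H_f$ by ``cancellation of $A_f$-quadratic cross-terms,'' and your sketch omits the two identities that actually close the computation. Writing $E_j=\bar E_j-JA_f(\xi)\bar E_j$, each term $h(\nabla^h_{E_j}E_j,JE_i)$ reduces -- via the identity $\g{X\phi_\fm(Y)+[\phi_\fh(X),\phi_\fm(Y)]}{\phi_\fm(Z)}=g(\nabla_{\bar X}\bar Y,\bar Z)$, which itself requires checking that a curvature term $g(R(\xi,A_f(\xi)\bar X)\bar Y,\bar Z)$ vanishes (true because $S^n$ has constant curvature and $d\pi(\caC_\xi)\perp\xi$) -- to $g(A_f(\xi)\nabla^f_{\bar E_j}\bar E_j-\nabla^f_{\bar E_j}[A_f(\xi)\bar E_j],\bar E_i)=-g([\nabla\II_f](\bar E_j,\bar E_j,\bar E_i),\xi)$. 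At this point the derivative sits in the wrong slot: the $j$-sum is a divergence-type expression, not $\nabla^\perp_{\bar E_i}H_f$. The indispensable ingredient, nowhere in your plan, is the Codazzi equation -- total symmetry of $\nabla\II_f$ in constant curvature -- which converts this to $-g([\nabla\II_f](\bar E_i,\bar E_j,\bar E_j),\xi)$. Only then does conformal shape form enter, and only through the elementary fact \eqref{eq:hHM} that $\mu^*h=(1+r^2)f^*g$ on $\caH_M$, so that the $h$-orthonormal $j$-sum is $\tfrac{1}{1+r^2}$ times the $g$-trace; that is what produces $\tfrac{1}{1+r^2}g(\nabla^\perp_{\bar Z}H_f,\xi)$ and $-\tfrac{1}{1+r^2}g(H_f,\bar W)$. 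Without Codazzi the argument stalls exactly at the passage from a divergence of $\II_f$ to a derivative of $H_f$.
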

We can be a little more precise about what happens in dimension $m\geq 3$ given the condition of conformal shape form. 
For $f$ to be minimal and have conformal shape form it must have both $\tr A_f(\xi)=0$ and
$A_f(\xi)^2 = r(\xi)^2I$ for all $\xi$. By considering the eigenvalues of $A_f(\xi)$ it is easy to see that
when $m$ is odd this forces $r(\xi)=0$, so $f$ must be be totally geodesic. This, and some other implications of conformal
shape form, can be found \cite{Mo}.  It is also quite restrictive when $M$ is a constant mean curvature
hypersurface for $m\geq 3$: we discuss this in Example \ref{exam:hyper} below.

Before we give the proof of this theorem, we consider some examples and how these fit in with the literature.
\begin{exam}\label{exam:totgeod}
The simplest examples arise when $M$ is a totally geodesic $m$-sphere. As we have mentioned, this is the
only case for which $\pi^\perp$ is a Riemannian submersion. We can represent $M$ as $V\cap S^n$, where
$V\subset\R^{n+1}$ is a linear subspace of dimension $m+1$, and then it is easy to see that 
\[
UM^\perp = (V\times V^\perp)\cap (S^n\times S^n) \subset US^n\subset S^n\times S^n.
\]
So $UM^\perp\simeq S^m\times S^{n-m-1}$. For each $m$ these are isometric to each other, and are homogeneous
submanifolds for a subgroup of $G$ isomorphic to $SO(m+1)\times SO(n-m)$. These are totally geodesic in both $US^n$
and $\caQ$. In particular, when $m=1$ and $n\geq 3$ these give the only examples, since $M$ must be a geodesic. 
The two foliations of $\caQ$ described in Remark \ref{rem:spheres} fit into this class of examples: they represent the 
cases of $m=n-1$ and $m=0$.
\end{exam}
\begin{exam}\label{exam:surfaces}
By Lemma \ref{lem:horizconf} and the theorem, for every minimal surface in $S^n$ its unit normal bundle is minimal 
Legendrian in $US^n$. By projection into $\caQ$ we obtain minimal Lagrangian submanifolds each  
of which is an $S^{n-3}$-bundle over a surface, and where each fibre is totally geodesic. 
This provides a vast supply of compact minimal Lagrangian submanifolds in $\caQ$, since there is a
very rich theory of compact minimal surfaces in spheres: in $S^3$ from Lawson's examples \cite{Law} and 
Hitchin's construction of all minimal tori \cite{Hit}; in $S^4$, from Bryant's superminimal surfaces \cite{Bry} and the
non-superminimal tori of \cite{FerPPS}; in higher dimensions from Riemannian twistor theory 
and integrable systems constructions. For surfaces in $S^3$ the geodesic Gauss map agrees with the usual Gauss
map (for a surface in $\R^4$). Palmer \cite{Pal94} seems to have been the first to point out that minimal
surfaces in $S^3$ have minimal Lagrangian Gauss maps; see also \cite{CasU}.
\end{exam}
\begin{exam}\label{exam:hyper}
When $M$ is a hypersurface we may as well restrict our attention to one connected component of $UM^\perp$ and identify
this with $M$. The geodesic Gauss map is then just the usual one, considering $M\subset\R^{n+1}$. Palmer
\cite{Pal97} gave a formula for $H_\gamma$ which applies in all cases, not just with the assumption of conformal
shape form (see Remark \ref{rem:hypersurf} below). From this formula he deduced that the Gauss map would be minimal
Lagrangian whenever $M$ is isoparametric, i.e., has constant principal curvatures. Palmer's formula has been the basis for
subsequent constructions of minimal Lagrangian submanifolds: from isoparametric hypersurfaces
\cite{MaO09,Ohn} and from rotationally symmetric (non-isoparametric) hypersurfaces \cite{LiMW}. The previous theorem 
yields nothing beyond this for hypersurfaces of dimension $3$ or more, because the condition that $f$ be CMC and have 
conformal shape form forces
$f$ to be isoparametric. To see why, we compare $H_\gamma$, which is the tension field for $\gamma:(M,\mu^*h)\to\caQ$, 
with the tension field $\tau(\bar\gamma)$ for $\bar\gamma:(M,f^*g)\to \caQ$ (the same map with a different metric on $M$). 
We can write
$\gamma = \bar\gamma\circ\pi^\perp$, and then the
composition formula for tension fields \cite[2.20]{EelL} says that
\[
H_\gamma = d\bar\gamma(\tau(\pi^\perp)) + \tr_{h_\caQ}\nabla d\bar\gamma(d\pi^\perp,d\pi^\perp).
\]
When $f$ has conformal shape form, with $A_f^2(\xi) = r(\xi)^2I$, the last term is 
$\tfrac{1}{1+r^2}\tau(\bar\gamma)$. When $f$ is additionally CMC, $H_\gamma=0$ by the previous theorem and
$\tau(\bar\gamma)=0$ by the Ruh-Vilms theorem \cite{RuhV}. Therefore $\pi^\perp$ is conformal and harmonic, and therefore a
harmonic morphism (with zero dimensional fibres). When $m\geq 3$ this
implies that $\pi^\perp$ is a homothety \cite[Thm 2]{Gud}, so $r(\xi)$ is constant. 
\end{exam}
To prove Theorem \ref{thm:main} we establish the general expressions for $H_\mu$ without the assumption of conformal shape form. 
We first set up an adapted orthonormal frame about each
point $\xi\in UM^\perp$. Since $UM^\perp$ is Legendrian, about each point $\xi\in UM^\perp$ 
we can choose a frame for $UM^\perp$ by taking local orthonormal
frames $\{E_i:I=1,\ldots,m\}$ for $\caH_M$ and $\{E_\beta:\beta = m+1,\ldots,n-1\}$ for $\caV_M$. Then
\[
\{E_i,E_\beta,JE_i,JE_\beta: 1\leq i\leq m,\ m+1\leq \beta \leq n-1\},
\]
frames $\mu^{-1}\caC$ orthonormally about $\xi$. 
From now on we will use the index conventions $1\leq A,B\leq n-1$, $1\leq i,j\leq m$ and $m+1\leq\beta\leq n-1$.
As before we define $\bar E_i = d\pi(E_i)$ and $\bar E_\beta = d\pi(JE_\beta)$ (note that $JE_\beta\in\caH_\xi$).
Then $TM = \Sp\{\bar E_i\}$ and $TM^\perp = \Sp\{\bar E_\beta,\xi\}$.
\begin{prop}
At $\xi\in UM^\perp$, with the adapted frame above and $p=\pi(\xi)$,
\begin{align}
h_\xi(H_\mu, JE_i) & =  -g_p(\sum_j[\nabla\II_f](\bar E_i,\bar E_j,\bar E_j),\xi),\label{eq:Hi}\\
h_\xi(H_\mu, JE_\beta) & = g_p(\sum_j \II_f(\bar E_j,\bar E_j),\bar E_\beta).\label{eq:Hbeta}
\end{align}
\end{prop}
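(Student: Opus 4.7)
The plan is to identify an orthonormal frame for the $\caC$-component of the normal bundle of $\mu$, decompose $H_\mu$ in that frame, and compute each component using the local Lie-algebraic formula \eqref{eq:LC} for $\nabla^h$. The Legendrian condition $\mu^*\lambda=0$, together with the identity $\lambda(X,Y)=h(JX,Y)$ implied by \eqref{eq:lambda0}, gives $h(JE_A,Y)=0$ for every tangent vector $Y$ to $\mu$. Since $J$ is an $h$-isometry on $\caC$ with $J^2=-I$, the collection $\{JE_i,JE_\beta\}$ is an orthonormal frame for the intersection of the normal bundle of $\mu$ with $\caC$; together with $\caZ$ it spans the full normal bundle in $US^n$. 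By Lemma \ref{lem:tension} applied to $\pi_\caQ$, $H_\mu$ lies in $\caC$, so it suffices to compute $h(H_\mu,JE_k)$ and $h(H_\mu,JE_\gamma)$.

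Using $H_\mu=\tau(\mu)$ and the orthogonality of the normal frame to tangential directions,
\[
h(H_\mu,JE_k)=\sum_A h(\nabla^h_{E_A}E_A,JE_k)=\sum_A\lambda(E_k,\nabla^h_{E_A}E_A),
\]
with the analogous reduction for $JE_\gamma$. To evaluate these at $\xi$, I would choose the frame $F$ for $f$ so that its Maurer--Cartan form $\alpha$ has $\alpha_\fk|_p=0$ at $p=\pi^\perp(\xi)$ (a normal frame for $\nabla^M$ and $\nabla^\perp$), and arrange $\Psi$ in $\Phi=(F\circ\pi^\perp)\Psi$ so that $\psi|_\xi=0$. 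These choices reduce \eqref{eq:LC} at $\xi$ to $\phi_\fp(\nabla^h_{E_A}E_A)|_\xi=E_A(\phi_\fp(E_A))|_\xi$, killing all spurious Christoffel-type terms while preserving the essential derivative information.

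By \eqref{eq:phim} and the decomposition $E_i=\bar E_i^H-J(A_f(\xi)\bar E_i)^H$ established after Lemma \ref{lem:vertJZ}, at $\xi$ the horizontal piece $\phi_\fm(E_i)$ corresponds to $\alpha_\fm(\bar E_i)\in\fm_0$, while the vertical piece $\phi_\fn(E_i^\caV)$ corresponds to $A_f(\xi)\bar E_i\in T_pM$ via $\ad\nu_0\colon\fm_0\to\fn$; analogously $\phi_\fn(E_\beta)$ corresponds to $\bar E_\beta\in T_pM^\perp$. Derivatives $E_A(\alpha_\fm(\bar E_B))$ are governed by the Maurer--Cartan equations and the Gauss--Weingarten formula for $f$: the $\fn$-part yields $\II_f(\bar E_A,\bar E_B)$ and the tangential $\fm$-part vanishes at $p$ by the frame choice. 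Pairing via $\g{\nu_0}{[\,\cdot\,,\,\cdot\,]}$ and using the symmetric-space bracket relations \eqref{eq:brackets} then selects the right combination: for \eqref{eq:Hbeta} the pairing collapses to $\sum_j g(\II_f(\bar E_j,\bar E_j),\bar E_\gamma)=g(H_f,\bar E_\gamma)$; for \eqref{eq:Hi} it produces $\bar E_i$-derivatives of the shape-operator data $\alpha_\fm(A_f(\xi)\bar E_j)$ which, by the Codazzi equation, assemble into $-g(\sum_j[\nabla\II_f](\bar E_i,\bar E_j,\bar E_j),\xi)$.

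The main obstacle is this last step: correctly separating the raw Lie-algebraic derivatives into contributions from $\nabla^\perp A_f$ and from the tangential Weingarten action, and verifying via Codazzi that the surviving combination is the symmetric contraction $\sum_j[\nabla\II_f](\bar E_i,\bar E_j,\bar E_j)$ rather than some other tensor built from $A_f$ and $\nabla^\perp A_f$. The normal frame choice eliminates all auxiliary connection terms at $\xi$, but the cross-terms arising from the vertical--horizontal splitting of $E_i$ still require careful algebraic bookkeeping.
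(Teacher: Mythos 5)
Your overall strategy coincides with the paper's: use Lemma \ref{lem:tension} to place $H_\mu$ in $\mu^{-1}\caC$, frame the normal bundle of $\mu$ inside $\caC$ by $\{JE_i,JE_\beta\}$ via the Legendrian condition, and evaluate $h(\nabla^h_{E_A}E_A,JE_B)$ through the Lie-algebraic formula \eqref{eq:LC}. The divergence, and the gap, lies in how you propose to dispose of the bracket term $[\phi_\fh(E_A),\phi_\fp(E_A)]$. The normalization ``$\psi|_\xi=0$'' is not achievable: since $\Phi=(F\circ\pi^\perp)\Psi$ must be a frame for $\mu$, the $\fn$-component of $\psi$ is forced --- on vertical directions $\phi_\fn(E_\beta)=\psi_\fn(E_\beta)$ is essentially the identity of the fibre and is nonzero, while on $\pi^\perp$-horizontal directions $\phi_\fn(X)=-J_0\phi_\fm(A_f(\xi)\bar X)$ by \eqref{eq:phiX}, which is tensorial in the shape operator and cannot be gauged away. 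You can at best arrange $\phi_\fh|_\xi=0$, but then the $A_f(\xi)$-dependent terms you hoped to kill reappear when you differentiate $\phi_\fp=\phi_\fm+\phi_\fn$ along $E_A$ (the derivative of $\Ad\Psi^{-1}$ contributes $-[\psi(E_A),\,\cdot\,]$ with $\psi(E_A)=\psi_\fn(E_A)\neq0$). The paper instead keeps the bracket term, splits $\phi_\fh=\phi_\fk-\phi_\fn$, and proves the identity \eqref{eq:D_X}, $\g{D_X\phi_\fm(Y)}{\phi_\fm(Z)}=g(\nabla_{\bar X}\bar Y,\bar Z)$, in which the $\phi_\fn$-contribution is recognised as the curvature term $g(R(\xi,A_f(\xi)\bar X)\bar Y,\bar Z)$ and shown to vanish because $d\pi(\caC_\xi)\perp\xi$ in constant curvature $1$. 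That cancellation is the heart of the proof, and your sketch asserts it (``the pairing \dots selects the right combination'') rather than establishing it; without it the surviving terms do not obviously assemble into $\nabla\II_f$.

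Two further points. First, in computing $h(H_\mu,JE_i)$ you must also account for the purely vertical contributions $\sum_\beta h(\nabla^h_{E_\beta}E_\beta,JE_i)$; the paper disposes of these by observing that the fibres $U_pM^\perp$ are totally geodesic, an argument absent from your sketch. Second, your closing identification $\sum_j g(\II_f(\bar E_j,\bar E_j),\bar E_\beta)=g(H_f,\bar E_\beta)$ is false in general: the vectors $\bar E_j=d\pi(E_j)$ are not $g$-orthonormal (cf.\ \eqref{eq:hHM}), which is precisely why the conformal shape form hypothesis is needed to pass from this Proposition to Theorem \ref{thm:main}. The slip is harmless for the Proposition as stated, but it suggests the bookkeeping between the $h$-orthonormal frame upstairs and the $g$-geometry downstairs needs more care throughout.
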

Theorem \ref{thm:main} is now a direct consequence of these expressions in the case when $f$ has conformal shape form,
since $\|\bar E_i\|^2 = 1/(1+r(\xi)^2)$ at $\xi$ in that case (and of course 
$\tr_g\nabla\II_f = \nabla^\perp \tr_g\II_f$).
\begin{proof}
By Lemma \ref{lem:tension} the mean curvature $H_\mu$ lies in $\mu^{-1}\caC$ so we need only calculate 
\[
h(H_\mu,JE_B)= \sum_Ah(\II_\mu(E_A,E_A),JE_B) = \sum_{A} h(\nabla^h_{E_A}\mu_*E_A,JE_B). 
\]
For simplicity, set $H_\mu^i = h(H_\mu,JE_i)$ and $H_\mu^\beta = h(H_\mu,JE_\beta)$. 
Using the expression \eqref{eq:LC} we have
\[
\beta_H(\nabla^\mu_{E_A}E_A) = \Ad\Phi\cdot (E_A\phi_\fp(E_A) + [\phi_\fh(E_A),\phi_\fp(E_A)]).
\]
Since each $E_\beta$ is vertical it has $\phi_\fm(E_\beta)=0=\phi_\fn(JE_\beta)$, so that
\begin{align}\label{eq:Hi1}
H_\mu^i & = \sum_A\g{E_A\phi_\fp(E_A) + [\phi_\fh(E_A),\phi_\fp(E_A)]}{\phi_\fp(JE_i)}\notag\\
&= \sum_j \g{E_j\phi_\fp(E_j) + [\phi_\fh(E_j),\phi_\fp(E_j)]}{\phi_\fp(JE_i)}\notag\\
&+ \sum_\beta \g{E_\beta\phi_\fn(E_\beta) + [\phi_\fh(E_\beta),\phi_\fn(E_\beta)]}{\phi_\fn(JE_i)},
\end{align}
and
\begin{equation}\label{eq:Hbeta1}
H_\mu^\beta  = \g{\sum_jE_j\phi_\fm(E_j) + [\phi_\fh(E_j),\phi_\fm(E_j)]}{\phi_\fm(JE_\beta)}.
\end{equation}
The second summand in \eqref{eq:Hi1} vanishes because the fibre $U_pM^\perp$ is totally geodesic. 
Each $E_\beta$ is tangent to this fibre, and the Levi-Civita connexion along this fibre is the restriction of
\eqref{eq:LC} to the case where $\phi_\fm(Z)=0=\phi_\fm(W)$. 

Therefore
\begin{align}\label{eq:Hi2}
H_\mu^i & = \sum_j \g{E_j\phi_\fm(E_j) + [\phi_\fh(E_j),\phi_\fm(E_j)]}{\phi_\fm(JE_i)}\notag\\
&+ \sum_j \g{E_j\phi_\fn(E_j) + [\phi_\fh(E_j),\phi_\fn(E_j)]}{\phi_\fn(JE_i)}.
\end{align}
To simplify this define, for $X,Y$ local vector fields on $UM^\perp$, 
\[
D_X\phi_\fp(Y) = X\phi_\fp(Y) + [\phi_\fh(X),\phi_\fp(Y)].
\]
This represents the (pullback to $UM^\perp$ of the) canonical connexion of $G/H$ in the local frame $\Phi$. We recall
that $\phi_\fp(JX) = J_0\phi_\fp(X)$ and observe that $D_X\phi_p(JX) = J_0D_X\phi_p(X)$ since $J_0$ is $\Ad_H$-invariant. 
We claim that whenever $X\in(\caH_M)_\xi$ and $Y,Z$ are local sections of $\caC$ about $\xi$, we have
\begin{equation}\label{eq:D_X}
\g{D_X\phi_\fm(Y)}{\phi_\fm(Z)} = g(\nabla_{\bar X}\bar Y,\bar Z).
\end{equation}
To see this, we use $\phi_\fh = \phi_\fk-\phi_\fn$ to write the left hand side as
\[
 \g{X\phi_\fm(Y) + [\phi_\fk(X),\phi_\fm(Y)]}{\phi_\fm(Z)} - \g{[\phi_\fn(X),\phi_\fm(Y)]}{\phi_\fm(Z)}.
\]
Now we observe that, from Lemma \ref{lem:vertJZ}, since $X\in\caH_M$,
\begin{equation}\label{eq:phiX}
\phi_\fn(X) = -J_0\phi_\fm(A_f(\xi)\bar X),\quad 
\phi_\fm(JX)  = J_0\phi_\fn(X) = \phi_\fm(A_f(\xi)\bar X),\quad 
\end{equation}
and $\phi_\fn(JX)  = J_0\phi_\fm(\bar X)$. We also note that \eqref{eq:phim} implies 
\begin{align*}
& \Ad\Phi\cdot(X\phi_\fm(Y) +[\phi_\fk(X),\phi_\fm(Y)])\\
 = & \Ad(F\circ\pi_N)\cdot
(\sum_{j=1}^m \bar X\alpha_\fm(\bar Y) +[\alpha_\fk(\bar X),\alpha_\fm(\bar Y)]).
\end{align*}
Therefore the left hand side of \eqref{eq:D_X} becomes
\begin{align*}
& \g{\bar X\alpha_\fm(\bar Y) + [\alpha_\fk(\bar X),\alpha_\fm(\bar Y)]}{\alpha_\fm(\bar Z)}
+ \g{[[\nu_0,\phi_\fm(A_f(\xi)\bar X)],\phi_\fm(Y)]}{\phi_\fm(Z)},\\
 & = g(\nabla_{\bar X}\bar Y,\bar Z) - g(R(\xi,A_f(\xi)\bar X)\bar Y,\bar Z),
\end{align*}
using the curvature expression \eqref{eq:R} and \eqref{eq:phim}, together with the fact that
at the point $\xi$ itself, $\beta_K(\xi) = \Ad\Phi\cdot\nu_0$ since $\Phi$ is a frame for $\mu$. 
But in $S^n$ this curvature term equals
\[
g(\xi,\bar Z)g(A(\xi)\bar X,\bar Y)-g(\xi,\bar Y)g(A(\xi)\bar X,\bar Z)=0,
\]
since $d\pi(\caC_\xi)\perp\xi$. This establishes \eqref{eq:D_X}. 

Now using \eqref{eq:D_X} and \eqref{eq:phiX} we compute the terms in \eqref{eq:Hi2}:
\begin{align*}
\g{ D_{E_j}\phi_\fm(E_j)}{\phi_\fm(JE_i)} & = g(\nabla_{\bar E_j}\bar E_j,A_f(\xi)\bar E_i)
  = g(A_f(\xi)\nabla^f_{\bar E_j}\bar E_j,\bar E_i),\\
\g{ D_{E_j}\phi_\fn(E_j)}{\phi_\fn(JE_i)} & = -\g{D_{E_j}\phi_\fm(JE_j)}{\phi_\fm(E_i)}
= -g(\nabla^f_{\bar E_j}[A_f(\xi)\bar E_j],\bar E_i),
\end{align*}
where we have also used that fact that $A_f(\xi)$ is self-adjoint and $J_0$ is an isometry. In this second term $\xi$
moves along the integral curve of $E_j$. Now $(\nabla_{\bar E_j}\xi)^\perp=0$, so the sum of the two expressions
can be simplified as follows:
\begin{align*}
g(A_f(\xi)\nabla^f_{\bar E_j}\bar E_j,\bar E_i)- g(\nabla^f_{\bar E_j}[A_f(\xi)\bar E_j],\bar E_i)
& = -g([\nabla A_f](\bar E_j,\xi)\bar E_j,\bar E_i)\\
& = -g( [\nabla  \II_f](\bar E_j,\bar E_j,\bar E_i),\xi)\\
& =-g([\nabla  \II_f](\bar E_i,\bar E_j,\bar E_j),\xi).
\end{align*}
The last equality holds 
since $\nabla\II_f$ is totally symmetric in spaces of constant sectional curvature (see, e.g., \cite[Cor.\ 4.4]{KobNII}).
This proves \eqref{eq:Hi}.

Finally, we obtain \eqref{eq:Hbeta} directly from \eqref{eq:Hbeta1} using \eqref{eq:D_X}:
\[
H_\mu^\beta  = g(\sum_j\nabla^f_{\bar E_j}\bar E_j,d\pi(JE_\beta))   = g(\sum_j \II_f(\bar E_j,\bar E_j),\bar E_\beta).
\]
\end{proof}
\begin{rem}\label{rem:hypersurf}
The calculations in the previous proof lead quickly to Palmer's formula \cite[Prop.\ 3.4]{Pal97} for $H_\gamma$ when 
$f:M\to S^n$ is a hypersurface. We fix a unit normal field $\xi$ and work only with the connected
component of $UM^\perp$ it determines, so that $\mu:M\to US^n$, $\mu(p)=\xi_p$.  In this case the adapted local frame for
$\mu^{-1}\caC$ consists only of $\{E_i,JE_i:i=1,\ldots,n-1\}$. We can choose these so that about $p$ the frame $\{\bar
E_i\}$ for $T_pM$ is principal, i.e, it diagonalises the shape operator. We write $\caA_f\bar E_i = \kappa_i \bar E_i$,
and therefore
\[
E_i = \bar E_i-\kappa_i J\bar E_i,
\]
where we are identifying $\bar E_i$ with its horizontal lift in $\caH_\xi$. Thus $\|\bar E_i\|^2 = 1/(1+\kappa_i^2)$.
The calculations in the previous proof show that
\begin{align*}
h(H_\mu, JE_i) & = -\sum_j g([\nabla^f_{\bar E_i}A_f]\bar E_j,\bar E_j)\\
& =- \sum_j g(d\kappa_j(E_i)\bar E_j,\bar E_j)\\
& =- \sum_j\frac{1}{1+\kappa_j^2} d\kappa_j(E_i). 
\end{align*}
This means that the mean curvature $1$-form $\sigma_{H_\gamma} = \lambda_\caQ(H_\gamma,\cdot)$ of $\gamma$ can be written as
\begin{equation}\label{eq:sigmaH}
\sigma_{H_\gamma} = \sum_j\frac{1}{1+\kappa_j^2} d\kappa_j.
\end{equation}
\end{rem}

\section{Hamiltonian variations.}

We now consider smooth deformations $f_t:M\times I \to S^n$ of an immersion $f_0:M\to S^n$ of a compact manifold $M$
(without boundary).  Here $I\subset\R$ is an open interval
about $0$ and we assume that, for each $t$, $f_t$ is an immersion whose normal bundle is diffeomorphic to $TM^\perp$ (i.e.,
the normal bundle of $f_0$). 
As a consequence of the Covering Homotopy Theorem (specifically, \cite[Thm 11.4]{Ste}) there exists a 
compatible smooth deformation $\mu_t:UM^\perp\times I\to US^n$ of 
the spherical Gauss map $\mu_0$ of $f_0$, i.e., $\pi\circ\mu_t = f_t\circ\pi^\perp$ at each $t$. This
will only be uniquely determined when $M$ is a hypersurface. Otherwise any two such deformations differ by a 
smooth family of bundle
isomorphisms of $UM^\perp$, each covering the identity on $M$ (and equal to the identity on $UM^\perp$ at $t=0$). 
In any case, $\gamma_t = \pi_\caQ\circ\mu_t$ is a deformation of the geodesic Gauss map $\gamma_0$ of
$f_0$, through Lagrangian submanifolds.  Let $V_t\in \Gamma(\gamma_t^{-1}T\caQ)$ be 
the variational vector field corresponding to this, and let 
\[
\sigma_{V_t} = \lambda_\caQ(V_t,d\gamma_t)\in\Gamma(T^*(UM^\perp)),
\]
be the $1$-form corresponding to it.  Since $V_t$ is a Lagrangian variation, $\sigma_{V_t}$ is closed. 
Recall that $\gamma_t$ is called a \emph{Hamiltonian} deformation when $\sigma_{V_t}$ is also exact. 
We will prove two results about Lagrangian variations of $\gamma$. Each relies on the following facts about $US^n$
as a circle bundle over $\caQ$. 

For simplicity we will denote $US^n$ as a circle bundle by $\pi_\caQ:\caS\to\caQ$. The contact distribution
$\caC$ provides a principal bundle connexion, with connexion $1$-form $i\theta$ and curvature $-i\lambda$. Its 
first Chern class $c_1(\caS)$  is therefore the cohomology class of $\tfrac{1}{2\pi}\lambda_\caQ$. Consequently, the
pullback of $\caS$ to any Lagrangian immersion is a flat bundle.
Let $\caL\to\caQ$ be the tautological $2$-plane bundle of $\Gr(2,n+1)$ and identify $T\caQ$ with $\Hom(\caL,\caL^\perp)$. 
The complex structure $J_\caQ$ coincides with giving $\caL$ the Hermitian line bundle structure which identifies
its unit circle bundle with $\caS^{-1}$ (this is because geodesic flow is a \emph{right} action by $S$).
There is an isomorphism of Hermitian connected lines bundles $\caL^{n-1}\simeq K_\caQ$, where the latter is 
the canonical bundle. 
The curvatures are related via the \Kah-Einstein equation $i\rho_\caQ = (n-1)i\lambda_\caQ$, where $\rho_\caQ$ is
the Ricci curvature of $\caQ$.
\begin{prop}
For every smooth deformation $f_t$ of $f_0$ preserving the diffeomorphism class of the unit normal bundle, every compatible
deformation $\gamma_t$ of $\gamma_0$ is Hamiltonian.
\end{prop}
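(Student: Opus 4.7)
The plan is to produce an explicit Hamiltonian potential for the variation by exploiting the Legendrian lift $\mu_t$ of $\gamma_t$. Let $W_t = \partial_t\mu_t \in \Gamma(\mu_t^{-1}TUS^n)$ denote the variational field of the spherical Gauss map, so that $V_t = (\pi_\caQ)_*W_t$. The candidate potential is the function
\[
h_t:UM^\perp\to\R,\qquad h_t = \theta(W_t),
\]
which is essentially the parallel transport data coming from $\mu_t$ as a horizontal section of the flat pullback circle bundle $\gamma_t^{-1}\caS$.

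First I would view $\mu:UM^\perp\times I\to US^n$ as a single smooth map and compute $\mu^*\theta$ on the product. Because each $\mu_t$ is Legendrian, $\mu^*\theta$ has no component along $T(UM^\perp)$, so
\[
\mu^*\theta \;=\; h_t\,dt.
\]
Taking the exterior derivative and using $\mu^*d\theta = d(\mu^*\theta)$ gives
\[
\mu^*d\theta \;=\; d_{UM^\perp}h_t \wedge dt,
\]
and evaluating on $(X,\partial_t)$ for an arbitrary $X\in T(UM^\perp)$ yields the key identity $d\theta(d\mu_t(X),W_t) = X\cdot h_t$.

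Next I would translate this into a statement about $\sigma_{V_t}$. Decompose $W_t = W_t^\caH + \theta(W_t)\caZ$ along $\caC\oplus\R\caZ$. Since $\pi_\caQ$ is a Riemannian submersion for which $\lambda_\caQ$ pulls back to $-d\theta$ on the contact distribution, and since $d\mu_t(X)\in\caC$,
\[
\sigma_{V_t}(X) \;=\; \lambda_\caQ(V_t,d\gamma_t(X))
= -d\theta(W_t^\caH,d\mu_t(X)).
\]
The Reeb identity $\iota_\caZ d\theta = 0$ lets me replace $W_t^\caH$ by $W_t$ at no cost, and the previous identity then gives $\sigma_{V_t}(X) = d\theta(d\mu_t(X),W_t) = X\cdot h_t$. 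Therefore $\sigma_{V_t} = dh_t$, which is the definition of a Hamiltonian deformation.

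I do not expect serious obstacles; the calculation is essentially formal once the Legendrian lift is in hand. The one place that needs care is verifying that the vertical (Reeb) part of $W_t$ does not spoil the identification of $\sigma_{V_t}$ with a quantity computed upstairs on $US^n$, which is exactly why the Reeb field's being in the kernel of $d\theta$ matters. No uniqueness of $\mu_t$ is needed: any compatible Legendrian lift (which exists by the Covering Homotopy Theorem, as already noted) will produce a valid Hamiltonian, though different lifts may differ by a Hamiltonian arising from the ambiguity in the lift.
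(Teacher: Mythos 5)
Your argument is correct, and it takes a genuinely more explicit route than the paper. The paper's proof is essentially a citation: it invokes Ma--Ohnita's Theorem 1.1, which identifies the cohomology class of $-i\sigma_{V_t}$ with the tangent vector to the curve $t\mapsto\gamma_t^{-1}US^n$ in the moduli space $H^1(UM^\perp,S^1)$ of flat circle bundles, so that exactness of $\sigma_{V_t}$ is equivalent to the existence of a smooth family of horizontal sections of these bundles --- which the spherical Gauss maps $\mu_t$ supply. You instead unwind that mechanism into a direct computation: viewing $\mu$ as a single map on $UM^\perp\times I$, the Legendrian condition forces $\mu^*\theta=h_t\,dt$ with $h_t=\theta(\partial_t\mu_t)$, and taking $d$ gives $d\theta(d\mu_t(X),W_t)=X\cdot h_t$; combined with $\pi_\caQ^*\lambda_\caQ=\lambda=-d\theta$ and the Reeb identity $\iota_\caZ d\theta=0$ (which, as you correctly note, is what lets you ignore the vertical part of $W_t$ when passing between $US^n$ and $\caQ$), this yields $\sigma_{V_t}(X)=-d\theta(W_t,d\mu_t(X))=d\theta(d\mu_t(X),W_t)=X\cdot h_t$, i.e.\ $\sigma_{V_t}=dh_t$. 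The two proofs rest on the same underlying fact --- a horizontal section of the flat pullback circle bundle kills the holonomy obstruction --- but yours is self-contained and produces the Hamiltonian explicitly as the Reeb component $\theta(\partial_t\mu_t)$ of the lifted variation, which is more informative. What the paper's citation buys in exchange is the converse direction of Ma--Ohnita's theorem (exactness of $\sigma_{V_t}$ implies existence of a smooth family of horizontal sections), which is used in the surrounding discussion of when Hamiltonian deformations of $\gamma$ come from deformations of $f$; your computation does not address that, nor does it need to for the stated proposition.
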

For hypersurfaces this was proved by Ma \& Ohnita \cite{MaO09}, where they also proved the converse for short term
deformations of $\gamma_t$. We will explain below why this converse will not generally hold for immmersions
$f:M\to S^n$ which have codimension at least two.
\begin{proof}
By \cite[Thm.\ 1.1]{MaO09}, for any Lagrangian variation $V$ of a Lagrangian
immersion $\gamma_0:UM^\perp\to\caQ$, $\sigma_{V_t}$ measures the variation in holonomy of the flat bundle 
$\caS=\gamma_0^{-1}US^n$, in
the following sense. Each flat bundle $\caS_t=\gamma_t^{-1}US^n$ determines a curve in the moduli space $H^1(U,S^1)$ of flat
bundles, whose tangent at $\caS_t$ is the cohomology class of $-i\sigma_{V_t}$. In particular, the family $\caS_t$ admits
a smooth family of horizontal sections $\mu_t$ if and only if $\sigma_{V_t}$ is exact. 
So if $\gamma_t$ arises from a deformation $f_t$ each $\caS_t$ has a horizontal section given by the spherical Gauss map
$\mu_t$, and therefore $\sigma_{V_t}$ is exact.
\end{proof}
In \cite{Pal97}, Palmer deduced from \eqref{eq:sigmaH} that the mean curvature $1$-form $\sigma_{H_\gamma}$ of
$\gamma$ is exact when $f$ is a hypersurface, so that mean curvature variation is Hamiltonian. We can prove this 
for the geodesic Gauss map of any
immersion $f:M\to S^n$ using another simple circle bundle argument.
\begin{prop}
For any immersion $f:M\to S^n$ with geodesic Gauss map $\gamma:UM^\perp\to\caQ$, 
the mean curvature vector field $H_\gamma$ is a Hamiltonian variation.
\end{prop}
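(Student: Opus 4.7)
The plan is to mirror the flat-circle-bundle argument of the previous proposition, but applied statically to $\gamma$ itself rather than to a deformation. The key general fact is the Dazord--Morvan identity: for any Lagrangian immersion $\gamma:L\to M$ into a \Kah--Einstein manifold with $\rho_M = c\,\omega_M$ and $c\neq 0$, the pullback $\gamma^{-1}K_M$ is flat (since $\rho_M|_L = c\,\omega_M|_L = 0$), and the mean curvature $1$-form satisfies $c\,\sigma_{H_\gamma} = d\theta$, where the Lagrangian angle $\theta$ is a priori only defined modulo $2\pi$ but lifts to a global smooth function on $L$ precisely when $\gamma^{-1}K_M$ has trivial holonomy as a flat Hermitian line bundle. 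Under that holonomy condition, $\sigma_{H_\gamma}$ is therefore exact.

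In our setting $M=\caQ$, the Einstein constant is $c = n-1\neq 0$ (the case $n=1$ is trivial), and the paper has already recorded the isomorphism $K_\caQ \simeq \caL^{n-1}$ of Hermitian connected line bundles. So it suffices to check that $\gamma^{-1}\caL$ has trivial holonomy. But $\caL$ has unit circle bundle $\caS^{-1}$, and the spherical Gauss map $\mu$ is a horizontal (Legendrian) section of $\gamma^{-1}\caS$. Taking inverses produces a parallel unit section of $\gamma^{-1}\caL$, trivializing it as a flat Hermitian line bundle. Therefore $\gamma^{-1}K_\caQ \simeq (\gamma^{-1}\caL)^{n-1}$ also has trivial holonomy, and $\sigma_{H_\gamma}$ is exact.

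The main obstacle is stating and justifying the Dazord--Morvan relation $c\,\sigma_{H_\gamma} = d\theta$ carefully enough that the correct Einstein factor $c=n-1$ enters, and in particular that the Lagrangian angle on $L$ is the connection form of $\gamma^{-1}K_\caQ$ in any unitary trivialization induced by the volume form of $L$. Once this is in hand, the conclusion follows immediately from the existence of the horizontal section $\mu$, which is the same circle-bundle mechanism already exploited in the proof of the preceding proposition.
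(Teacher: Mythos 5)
Your proposal is correct and takes essentially the same route as the paper's proof: both identify $\sigma_{H_\gamma}$ (up to the Einstein constant) with the connexion form of the flat bundle $\gamma^{-1}K_\caQ\simeq(\gamma^{-1}\caL)^{n-1}$ and use the horizontal section $\mu$ of $\gamma^{-1}\caS$ to trivialize $\gamma^{-1}\caL$ and hence kill the holonomy. The only cosmetic difference is that you phrase the key input as the Dazord--Morvan/Lagrangian-angle relation, whereas the paper cites Oh's Prop.~2.2 directly for the statement that $i\sigma_{H_\gamma}$ is the connexion $1$-form of $\gamma^{-1}K_\caQ$.
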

An immediate corollary is that being Hamiltonian stationary and being Lagrangian stationary mean the same thing for
geodesic Gauss maps.
This also makes it reasonable to ask under what circumstances $H_\gamma$ arises from a deformation of $f$, for
it would be interesting to know when Lagrangian mean curvature flow of $\gamma$ keeps it inside the class of
geodesic Gauss maps of immersions of $M$. 
\begin{proof}
By \cite[Prop.\ 2.2]{Oh94}, $i\sigma_{H_\gamma}$ is the connexion $1$-form for the bundle $\gamma^{-1}K_\caQ$, the
pull-back of the canonical bundle over $\caQ$ (and by Dazord's fomula $d\sigma_{H_\gamma} = \gamma^*\rho_\caQ=0$).
Now $K_\caQ\simeq\caL^{n-1}$ as Hermitian connected line bundles
and $\gamma^{-1}\caL$ is trivial (it has horizontal section $\mu$). Thus $\gamma^{-1}K_\caQ$ is also trivial and
thus $\sigma_{H_\gamma}$ must be exact.
\end{proof}
\begin{rem}
If $f:M\to S^n$ is an immersion of codimension at least two, one cannot expect every Hamiltonian variation of its
geodesic Gauss map $\gamma$ to provide a deformation of $f$, even for a short time. There are immersions $f$
which admit smooth Lagrangian deformations $\gamma_t:UM^\perp\to\caQ$, $\gamma_0=\gamma$, for which $\gamma_t$ for $t\neq
0$ is the geodesic Gauss map of an immersion of $UM^\perp$ as a hypersurface in $S^n$. Such families are easy to
construct for $n=3$, by taking a family of immersions $f_t:S^1\times S^1\to S^3$ for which $d\mu_t$ is transverse to the
vertical distribution $\caV$ except at $t=0$, where one of the $S^1$ factors becomes vertical and thus $\mu_0$ is the
unit normal bundle for a closed curve $f_0$ in $S^3$. For example, such a family arises by considering the pre-image
under the Hopf fibration $S^3\to S^2$ of the intersection of a plane $V\subset\R^3$ with $S^2$. By tilting the plane
appropriately one can obtain a family of circles in $S^2$ which degenerate to a point at $t=0$. 
Lying over this on $S^3$ we have a a family of tori which degenerates to a geodesic at $t=0$. 

When $f$ is a hypersurface it is the stability of the transversal intersection $T(UM^\perp)\cap\caV\subset\caC$
which prevents degeneration from happening at least for short term deformations. Indeed, we expect that for \emph{every} 
immersion $f:M\to S^n$ of codimension at least two, there is a Hamiltonian deformation 
$\gamma_t:UM^\perp\to\caQ$ of its geodesic Gauss map for which $\pi\circ\mu_t:UM^\perp\to S^n$ is an immersion except 
at $t=0$. Of course, the
lifts $\mu_t:UM^\perp\to US^n$ always exist because the bundles $\gamma_t^{-1}\caS$ admit horizontal sections for
Hamiltonian deformations. 
\end{rem}

\end{document}